\DeclareMathAlphabet{\pazocal}{OMS}{zplm}{m}{n}
\renewcommand{\cV}{\pazocal{V}}
\newcommand{\Mod}[1]{\ \mathrm{mod}\ #1}
\newcommand{\dif}{\mathop{}\!\mathrm{d}}
\newcommand{\bdelta}{\bm{\delta}}
\newcommand\nnfootnote[1]{%
  \begin{NoHyper}
  \renewcommand\thefootnote{}\footnote{#1}%
  \addtocounter{footnote}{-1}%
  \end{NoHyper}
}
\title{A new family of smooth copulas with arbitrarily irregular densities}
\author{Micha\"{e}l Lalancette$^*$}
\author{Robert Zimmerman$^*$}
\affil{Department of Statistical Sciences \\
University of Toronto}
\date{\today}
\begin{document}

\maketitle
\nnfootnote{$^*$Equal contribution}

\begin{abstract}
    Copulas are known to satisfy a number of regularity properties, and one might therefore believe that their densities, when they exist, admit a certain degree of regularity themselves. We show that this is not true in general by constructing a broad family of copulas which admit densities that can hardly be considered regular. The copula densities are constructed from arbitrary univariate densities supported on the unit interval, and we show by example that the copula densities can inherit pathological behaviour from the underlying univariate densities. In particular, we construct a nontrivial univariate density which is unbounded in every open set of the unit interval, and show that it induces a copula density which is finite everywhere but unbounded in every neighborhood of the unit hypercube. Nevertheless, all of our copulas are shown to enjoy attractive smoothness properties.
\end{abstract}

\section{Introduction}

Let $d \geq 2$. A $d$-dimensional \emph{copula} is a function $C:[0,1]^d \to [0,1]$ satisfying the following three conditions:
\begin{align}
    C(1, \dots, 1, u, 1, \dots, 1) &= u, \quad 0 \leq u \leq 1 \label{eq:marginal} \\
    C(u_1, \ldots, u_{i-1}, 0, u_{i+1}, \ldots, u_d) &= 0, \quad 0 \leq u_j \leq 1 \label{eq:grounded} \\
    \sum_{i_1 = 1}^2 \cdots \sum_{i_d = 1}^2 (-1)^{i_1 + \cdots i_d} C\left( u_1^{(i_1)}, \dots, u_d^{(i_d)} \right) &\geq 0, \quad 0 \leq u_j^{(1)} \leq u_j^{(2)} \leq 1. \label{eq:dincreasing}
\end{align}
According to a famous result in probability theory known as \emph{Sklar's theorem} \citep{sklar1959fonctions}, for any random vector $\bX \in \R^d$ with continuous marginal distribution functions $F_1, \dots, F_d$, there is a unique copula $C$ that satisfies
\[
    C(\bu) := \Prob(F_1(X_1) \leq u_1, \dots, F_d(X_d) \leq u_d), \quad \bu := (u_1, \cdots, u_d).
\]
The function $C$ is said to be the \emph{copula of} $\bX$; it is the joint distribution function of the random vector $(F_1(X_1), \dots, F_d(X_d))$. Conversely, every distribution function satisfying \cref{eq:marginal} is a copula. Informally, the copula fully characterizes the dependence between the random variables $X_1, \dots, X_d$ in a way that does not depend on their distributions $F_1, \dots, F_d$. An extensive literature \citep{schweizer1991thirty,joe1997multivariate,nelsen2007introduction,durante2016principles} reveals a vast array of dependence structures that can be explicitly described using copulas.

If the distribution of $\bX$ is absolutely continuous with respect to Lebesgue measure, then $C$ admits a \emph{copula density} $c$ given by
\[
    c(\bu) := \frac{\partial^d}{\partial u_1 \dots \partial u_d} C(\bu).
\]
The fact that \cref{eq:marginal,eq:grounded,eq:dincreasing} are stronger than the standard properties of multivariate distribution functions imposes a number of regularity properties. For example, $C$ satisfies sharp lower and upper bounds known as the \emph{Fr\'echet--Hoeffding bounds}: 
\[
    \max\left\{\sum_{h=1}^d u_h - (d-1), 0 \right\} \leq C(\bu) \leq \min\{u_1,\ldots,u_d\}.
\]
It is also not hard to see that $C$ is 1-Lipschitz with respect to the supremum norm. One might therefore suspect that when $C$ is absolutely continuous, its copula density $c$ cannot be too poorly-behaved. In practice, however, one often encounters poor behavior on the boundary of the support of $c$ or on sets of measure zero. For example, the Clayton copula density \citep{hofert2012likelihood} is unbounded along the curve $\sum_{i=1}^d u_i^{-\theta} = d-1$ when $\theta < -1/d$, while the Gaussian copula density is unbounded at the corners $\mathbf{0} := (0, \dots, 0)$ and $\mathbf{1} := (1, \dots, 1)$; other examples include the Gumbel and $t$ families \citep{bouezmarni2013bernstein}. Such behaviour is inconvenient but usually not overly restrictive. However, as we shall demonstrate, there exist highly regular copulas with densities that are poorly behaved on their entire domain.

This paper offers two main contributions. First, in \cref{sub:construction}, we construct a class $\cC$ of absolutely continuous copulas on $[0,1]^d$, for which the corresponding class of copula densities includes highly pathological functions. Each copula $C_f \in \cC$ is ``generated'' by an underlying univariate density $f$ on $[0,1]$ whose smoothness behaviour determines the smoothness of the corresponding copula density $c_f$. In particular, the set of discontinuity points of $f$ determines that of $c_f$. This, among other properties of the family $\cC$, is shown in \cref{sub:properties}. In \cref{sub:simpleexamples}, we exhibit several examples of copulas in $\cC$. As our second main contribution, in \cref{sub:univariate}, we identify a nontrivial copula $C_{f^*} \in \cC$ whose copula density $c_{f^*}$ is unbounded in every neighborhood of the unit hypercube --- that is, $\sup_{\bu \in U} c_{f^*}(\bu) = \infty$ for every nonempty open set $U \subset [0, 1]^d$. Nevertheless, we show that any $C_f \in \cC$, including $C_{f^*}$, must have partial derivatives of higher order that are continuous everywhere as well as bounded, which opens up interesting connections to statistical inference via empirical copulas. We conclude with a brief discussion and suggestions for future work in \cref{sec:discussion}.

Our results, particularly our construction of the aforementioned copula $C_{f^*}$, showcase a fundamental decoupling between copulas and their densities. A certain degree of smoothness is required of a copula $C$ for elegant probabilistic properties to hold, especially with regards to large sample statistical theory, when observations are drawn from a distribution with copula $C$. In contrast, our results show that considering only copulas with ``traditional" densities (which are, for example, bounded and continuous almost everywhere) is indeed restrictive.

\subsection{Notation and conventions}

We write $\bu$ for a vector in $\R^d$ and $u_1, \dots, u_d$ for its elements, $\bu_{-j} = (u_1,\ldots,u_{j-1},u_{j+1},\ldots,u_d) \in \R^{d-1}$ for $\bu$ with its $j$th element removed, and $\bu_J$ for the subvector indexed by the elements of an arbitrary set $J \subset \{1, \dots, d\}$. We use $\be_i$ for the $i$th canonical basis vector. $[\bzero, \bu]$ is to be understood as the rectangle $\bigtimes_{h=1}^d [0,u_h]$. By $x \Mod{1}$, we mean the fractional part of $x$ given by $x - \lfloor x \rfloor$, where $\lfloor x \rfloor$ denotes the largest integer upper bounded by $x$. All densities are taken with respect to Lebesgue measure on $\R^p$, where the dimension $p$ should be clear from context. We define $\cF_{[0,1]}$ as the set of densities on $[0,1]$, i.e., any $f \in \cF_{[0,1]}$ is measurable, non-negative and integrates to 1 on $[0,1]$. To avoid ambiguity, we continue to distinguish ``copula densities'' from ``univariate densities'' throughout. All our results hold for a fixed, but arbitrary, dimension $d \geq 2$, unless specified otherwise.

\section{Construction of \texorpdfstring{$\cC$}{Cf} and basic properties}\label{sec:construction}

\subsection{The construction}\label{sub:construction}

Our construction is very simple and works in any dimension. Rather than explicitly defining a copula $C_f \in \cC$, we begin with an appropriate univariate density and use it to specify the copula density $c_f$; this in turn determines $C_f$. Specifically, given any $f \in \cF_{[0,1]}$, we define
\begin{equation}
    c_f(\bu) := f\left( \sum_{j=1}^d u_j \Mod{1} \right), \quad \bu \in [0, 1]^d. \label{eq:ourcopula}
\end{equation}

Our first result shows that $c_f$ is a true copula density. We thus define our class $\cC$ as the set of $d$-dimensional copulas which have a density in $\{c_f: f \in \cF_{[0, 1]}\}$.

\begin{prop} \label{prop:copula}
    
    For every univariate density $f \in \cF_{[0,1]}$, there exists a copula $C_f$ whose copula density is given by $c_f$.
    
\end{prop}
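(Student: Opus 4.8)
The plan is to \emph{define} $C_f$ as the indefinite integral of $c_f$ and then verify the copula axioms one at a time. Set
\[
C_f(\bu) := \int_{[\bzero,\bu]} c_f(\bt)\dif\bt = \int_0^{u_1}\!\!\cdots\int_0^{u_d} f\Bigl(\textstyle\sum_{j=1}^d t_j \Mod 1\Bigr)\dif t_d\cdots\dif t_1,\qquad \bu\in[0,1]^d.
\]
Because $c_f$ is measurable and non-negative, $C_f$ is automatically grounded (\cref{eq:grounded} holds since $[\bzero,\bu]$ is Lebesgue-null whenever some $u_j=0$) and $d$-increasing (the alternating sum in \cref{eq:dincreasing} is exactly the integral of $c_f$ over the box $\bigtimes_j[u_j^{(1)},u_j^{(2)}]$, hence non-negative). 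So the substantive tasks are: (i) to show $\int_{[0,1]^d}c_f=1$, so that $C_f$ is the distribution function of a genuine probability measure on $[0,1]^d$; and (ii) to verify the uniform-marginal condition \cref{eq:marginal}.

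Both (i) and (ii) rest on the elementary fact that translation modulo $1$ preserves Lebesgue measure on $[0,1]$: for any $a\ge 0$ and any integrable $g$ on $[0,1]$, splitting the integral at $1-(a\Mod 1)$ gives $\int_0^1 g\bigl((x+a)\Mod 1\bigr)\dif x=\int_0^1 g$. Applying this with $g=f$ and $a=\sum_{j\ne 1}u_j$ shows that for every fixed $(u_2,\dots,u_d)$ the innermost integral $\int_0^1 c_f(\bu)\dif u_1$ equals $\int_0^1 f=1$; integrating out the remaining $d-1$ coordinates over $[0,1]$ yields (i). For (ii), observe that $c_f(\bu)$ depends on $\bu$ only through $\sum_j u_j$ and is therefore symmetric in its arguments, so all one-dimensional marginals agree and it suffices to treat the $d$-th. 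Holding $u_d$ fixed and running the same computation (integrate out $u_1$, then $u_2,\dots,u_{d-1}$, using $d\ge 2$ to guarantee a free coordinate is available) shows the marginal density $\bu_{-d}\mapsto\int_{[0,1]^{d-1}}c_f(\bu)\dif\bu_{-d}$ is identically $1$ on $[0,1]$, whence $C_f(1,\dots,1,u_d)=\int_0^{u_d}1\dif t=u_d$. Thus $C_f$ is a copula.

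It remains to confirm that $c_f$ is the copula density of $C_f$. Since $C_f$ is the indefinite integral of the integrable function $c_f$, it is absolutely continuous with $c_f$ as a version of its density, and a standard application of Fubini's theorem together with the Lebesgue differentiation theorem identifies the mixed partial $\partial^d C_f/\partial u_1\cdots\partial u_d$ with $c_f$ at almost every point (in particular at every continuity point of $c_f$), which is precisely the claim. I do not anticipate any real obstacle here: once the measure-preserving property of $x\mapsto(x+a)\Mod 1$ is isolated, the argument is bookkeeping. As a sanity check, the same object arises probabilistically by taking $U_1,\dots,U_{d-1}$ i.i.d.\ $\mathrm{Uniform}[0,1]$ and $Z$ an independent variable with density $f$, and setting $U_d:=\bigl(Z-\sum_{j<d}U_j\bigr)\Mod 1$; a change of variables with unit Jacobian shows $(U_1,\dots,U_d)$ has joint density $c_f$, and the translation-invariance fact again forces each $U_j$ to be uniform.
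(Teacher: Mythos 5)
Your proposal is correct and rests on the same key computation as the paper's proof: splitting the integral of $f\bigl((x+a)\Mod 1\bigr)$ at $1-(a\Mod 1)$ and changing variables to show that each one-dimensional section of $c_f$ integrates to $1$. The only difference is that you spell out groundedness, $d$-increasingness, and the identification of the mixed partial explicitly, whereas the paper compresses these into the standard fact that a non-negative measurable function on $[0,1]^d$ with uniform univariate marginals is the density of a copula.
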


\begin{proof}
    
    The function $c_f$ is clearly non-negative and measurable, as it is a composition of finitely many measurable functions. In view of \cref{eq:marginal}, it is therefore a valid copula density if for each $j \in \{1, \dots, d\}$, it satisfies the marginal constraint
    \begin{equation} \label{eq:marginaldensity}
        \int_{[0, 1]^{d-1}} c_f(\bu) \dif\bu_{-j} = 1
    \end{equation}
    regardless of the value of $u_j$. In fact, the function $c_f$ satisfies a stronger property: let $i \in \{1, \dots, d\}$ be an arbitrary index and fix all the coordinates of $\bu$ but the $i$th one. Define
    \[
        v := \sum_{j \neq i} u_j \Mod{1}.
    \]
    Then
    \begin{equation} \label{eq:integrateto1}
        \int_0^1 c_f(\bu) \dif u_i = \int_0^{1 - v} f(u_i + v) \dif u_i + \int_{1-v}^1 f(u_i + v - 1) \dif u_i = \int_v^1 f(u) \dif u + \int_0^v f(u) \dif u
        = 1,
    \end{equation}
    where the second equality follows by an affine change of variable. It follows that \cref{eq:marginaldensity} is satisfied; hence $c_f$ is the density of a copula $C_f$.
\end{proof}

\subsection{Some properties of the copulas \texorpdfstring{$C_f$}{Cf}}\label{sub:properties}

It is interesting to note that while the copulas $C_f$ can have extremely irregular densities (see \cref{sub:univariate}), the copulas themselves enjoy several attractive properties which we list here.

\begin{prop}\label{prop:basicproperties}
    
    If $\bU \sim C_f$ for some $f \in \cF_{[0, 1]}$, the following hold.
    \begin{enumerate}
        \item $\bU$ is exchangeable.
        \item For any $i \in \{1, \dots, d\}$, $\bU$ has the following stochastic representation, where $\Pi$ denotes the independence copula:
        \[
            \bU_{-i} \sim \Pi, \quad \bigg( U_i + \sum_{j \neq i} U_j \bigg) \Mod{1}\ \bigg|\ \bU_{-i} \sim f.
        \]
    \end{enumerate}
    
\end{prop}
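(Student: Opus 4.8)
The plan is to prove both claims by a single explicit computation of the copula $C_f$ from its density $c_f$, since once we have a usable closed form (or at least a symmetric integral representation) for $C_f$, exchangeability will be immediate and the stochastic representation will follow by reading off the relevant conditional and marginal laws.

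First I would write $C_f(\bu) = \int_{[\bzero,\bu]} c_f(\bv)\dif\bv = \int_{[\bzero,\bu]} f\big(\sum_{j=1}^d v_j \Mod 1\big)\dif\bv$. For \emph{exchangeability}, the key observation is that $c_f$ depends on $\bv$ only through $\sum_j v_j \Mod 1$, which is a symmetric function of the coordinates; hence $c_f$ is invariant under any permutation of its arguments. Since both the density $c_f$ and the region of integration $[\bzero,\bu]$ transform compatibly under a simultaneous permutation of the $v$'s and the $u$'s, we get $C_f(u_{\sigma(1)},\dots,u_{\sigma(d)}) = C_f(u_1,\dots,u_d)$ for every permutation $\sigma$, which is exactly exchangeability of $\bU$. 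I would phrase this via the change of variables $v_j \mapsto v_{\sigma(j)}$ in the integral.

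For the \emph{stochastic representation}, fix $i$ and condition on $\bU_{-i} = \bu_{-i}$. By \cref{prop:copula} the joint density of $\bU$ is $c_f$, so the marginal density of $\bU_{-i}$ is $\int_0^1 c_f(\bu)\dif u_i$, which by \cref{eq:integrateto1} equals $1$ for every $\bu_{-i} \in [0,1]^{d-1}$; this shows $\bU_{-i} \sim \Pi$, the independence copula. Consequently the conditional density of $U_i$ given $\bU_{-i} = \bu_{-i}$ is just $c_f(\bu) = f\big((u_i + \sum_{j\neq i} u_j)\Mod 1\big)$ on $[0,1]$. It then remains to check that if $U_i$ has this conditional density, the random variable $W := \big(U_i + \sum_{j\neq i} u_j\big)\Mod 1$ has density $f$ on $[0,1]$: writing $v := \sum_{j\neq i} u_j \Mod 1$, the map $u_i \mapsto (u_i + v)\Mod 1$ is a measure-preserving bijection of $[0,1]$ (a ``rotation'' of the circle, piecewise a translation with unit Jacobian), so it pushes the density $u_i \mapsto f((u_i+v)\Mod 1)$ forward to $f$. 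This is essentially the same change-of-variables already used in \cref{eq:integrateto1}, now tracked at the level of densities rather than just the normalizing integral.

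The only mildly delicate point — and the part I would write most carefully — is the bookkeeping around $\Mod 1$: one must split $[0,1]$ at $u_i = 1 - v$ (when $v \in (0,1)$), note that the two pieces map to $[v,1]$ and $[0,v)$ respectively, each via a genuine translation with Jacobian $1$, and then reassemble. This is routine but is the place where an error would most easily creep in, so I would state it as the ``rotation invariance of Lebesgue measure on the torus'' and cite \cref{eq:integrateto1} for the computation. Everything else (measurability, that conditional densities exist because the joint density does) is immediate.
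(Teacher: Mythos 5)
Your proposal is correct and follows essentially the same route as the paper: exchangeability from the symmetry of $c_f$ in its arguments, independence of $\bU_{-i}$ from the identity in \cref{eq:integrateto1}, and the conditional law of $\big(U_i + \sum_{j\neq i}U_j\big)\Mod 1$ obtained by pushing the conditional density $u_i \mapsto f\big((u_i+v)\Mod 1\big)$ forward through the measure-preserving circle rotation. Your write-up is in fact somewhat more explicit than the paper's (which compresses the last step to ``the conditional density is $f/1 = f$''), but there is no substantive difference.
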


\begin{proof}
    
    Exchangeability follows immediately from \cref{eq:ourcopula}; the density $c_f(\bu)$ depends only on $\bu$ through the sum of its arguments, which is a symmetric function.
    
    For the stochastic representation, the independence of the random vector $\bU_{-i}$ follows from the fact that its density is given by
    \[
        \int_0^1 c_f(\bu) \dif u_i = 1, \quad \bu_{-i} \in [0, 1]^{d-1}
    \]
    by \cref{eq:integrateto1}. It then follows that given $\sum_{j \neq i} U_j$, the conditional density of
    \[
        \bigg( U_i + \sum_{j \neq i} U_j \bigg) \Mod{1}
    \]
    is given by $f/1 = f$.
\end{proof}

An immediate consequence of the above representation is that if a random vector $\bX$ has copula $C_f$, then the marginal vector $\bX_{-i}$ has the independence copula. That is, any subset of at most $d-1$ of the variables $X_1, \dots, X_d$ are mutually independent. As a corollary, we find that the intersection of $\cC$ and the class of Archimedean copulas is trivial, so no confusion arises from our use of the word ``generator'' and its use for Archimedean generators. 

\begin{coro}

    The only Archimedean copula in $\cC$ is $\Pi$. 

\end{coro}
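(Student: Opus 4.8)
The plan is to combine the structural facts just established --- chiefly the representation $\bU_{-i} \sim \Pi$ of \cref{prop:basicproperties} --- with the defining functional equation of an Archimedean copula. First note that $\Pi \in \cC$: taking $f \equiv 1 \in \cF_{[0,1]}$ gives $c_f \equiv 1$, the density of $\Pi$. It therefore suffices to show that any Archimedean copula $C_f \in \cC$ equals $\Pi$. Write such a $C_f$ as $C_f(\bu) = \psi\bigl( \sum_{j=1}^d g(u_j) \bigr)$ for a continuous, decreasing, convex generator $\psi$ with $\psi(0) = 1$, and put $g := \psi^{-1}$, so that $g(1) = 0$.

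\emph{The case $d \geq 3$.} Every lower-dimensional margin of an Archimedean copula is Archimedean with the same generator, so the $(d-1)$-dimensional margin is $C_f(u_1, \dots, u_{d-1}, 1) = \psi\bigl( \sum_{j=1}^{d-1} g(u_j) \bigr)$. On the other hand, \cref{prop:basicproperties} identifies this margin with $\Pi$ in dimension $d-1$, namely $\prod_{j=1}^{d-1} u_j$. Setting all but two of the $u_j$ equal to $1$ (possible because $d - 1 \geq 2$) yields $g(u_1 u_2) = g(u_1) + g(u_2)$ for all $u_1, u_2 \in (0, 1]$. With $u_i = e^{-t_i}$ this is Cauchy's equation $h(t_1 + t_2) = h(t_1) + h(t_2)$ for the monotone function $h(t) := g(e^{-t})$, whence $h$ is linear; thus $g(u) = -\lambda \log u$ for some $\lambda > 0$, $\psi(t) = e^{-t/\lambda}$, and $C_f = \Pi$.

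\emph{The case $d = 2$} is the main obstacle, since the marginal argument becomes vacuous. Here I would compute a conditional distribution function in two ways. On the triangle $\{u_1 + u_2 \leq 1\}$ the reduction modulo $1$ in \cref{eq:ourcopula} is inactive, so, writing $F$ for the distribution function of $f$, $\partial_{u_1} C_f(u_1, u_2) = \int_0^{u_2} f(u_1 + t) \dif t = F(u_1 + u_2) - F(u_1)$; from the Archimedean form the same quantity equals $\psi'\bigl( g(u_1) + g(u_2) \bigr)\, g'(u_1)$. An auxiliary identity comes from the corner: $\partial_{u_1} C_f(1, u_2) = \int_0^{u_2} f\bigl( (1 + t) \Mod 1 \bigr) \dif t = F(u_2)$, while the Archimedean form gives $\psi'(g(u_2))/\psi'(0)$; hence $\psi'(g(u)) = \psi'(0) F(u)$ and therefore $g'(u) = 1 / \bigl( \psi'(0) F(u) \bigr)$. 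Feeding this into the first identity gives $\psi'\bigl( g(u_1) + g(u_2) \bigr) = \psi'(0)\, F(u_1)\bigl( F(u_1 + u_2) - F(u_1) \bigr)$ on the triangle. Symmetry of the left-hand side in $u_1, u_2$ then forces $\bigl( F(u_1) - F(u_2) \bigr)\bigl( F(u_1 + u_2) - F(u_1) - F(u_2) \bigr) = 0$; since $F$ is continuous and nondecreasing with $F(0) = 0$, this yields the additive relation $F(u_1 + u_2) = F(u_1) + F(u_2)$ throughout the triangle, hence $F(u) = F(1) u = u$, i.e.\ $f \equiv 1$ and $C_f = \Pi$.

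What remains is technical rather than conceptual. The differentiations of $C_f$ used above are justified once one notes that $C_f$ is absolutely continuous --- so that $\partial_{u_1} C_f(u_1, u_2) = \int_0^{u_2} c_f(u_1, t) \dif t$ is well defined and continuous in $u_1$ --- and that absolute continuity of an Archimedean copula forces sufficient regularity of its generator (in the bivariate case, a continuously differentiable generator with absolutely continuous derivative), which also legitimises the evaluation at the corner $u_1 = 1$. One should further rule out the degenerate scenario in which $F$ is flat on a subinterval, so that $F(u_1) = F(u_2)$ with $u_1 \neq u_2$; this is dispatched using monotonicity of $F$. I do not expect any of these points to present real difficulty. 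The upshot is clean: any departure of $f$ from the uniform density already destroys the Archimedean (equivalently, associativity) structure.
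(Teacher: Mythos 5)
Your argument for $d \geq 3$ is essentially the paper's: both identify the $(d-1)$-dimensional margin with $\Pi$ via \cref{prop:basicproperties} and extract Cauchy's logarithmic functional equation for $\psi^{-1}$. Where you genuinely depart from the paper is the case $d = 2$, and you are right to flag it: with only one coordinate remaining, the marginal identity collapses to the tautology $\psi^{-1}(u) = \psi^{-1}(u)$, so the functional-equation step degenerates and the paper's proof, read literally, only covers $d \geq 3$. Your replacement argument --- computing $\dot C_{f,1}$ once from the density as $F(u_1+u_2)-F(u_1)$ on the triangle and once from the Archimedean form, then exploiting symmetry of $\psi'\left(g(u_1)+g(u_2)\right)$ in $(u_1,u_2)$ --- is a sound route, and the algebra checks out: symmetry does force $\left(F(u_1)-F(u_2)\right)\left(F(u_1+u_2)-F(u_1)-F(u_2)\right)=0$ on the triangle. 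This is a real addition, since it closes a case the paper's own proof does not.

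Two of the points you defer deserve more than a wave, though neither is fatal. First, the identities $\dot C_{f,1}(u_1,u_2)=\psi'\left(g(u_1)+g(u_2)\right)g'(u_1)$ and the evaluation at $u_1=1$ presuppose that $\psi$ is differentiable everywhere on $[0,g(0^+))$ with $\psi'(0)\neq 0$; convexity of the generator only guarantees one-sided derivatives, so you should either run the whole computation with right-derivatives or first show that the identity $\psi'(g(u))=\psi'(0)F(u)$, valid off a countable set, extends by monotonicity. Second, ``dispatched using monotonicity of $F$'' undersells the flat case: monotonicity alone does not preclude $F(u_1)=F(u_2)$ with $u_1\neq u_2$, which is exactly the branch of your dichotomy that blocks additivity. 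A clean fix is a connectedness argument: fix $s=u_1+u_2$ and set $\phi(t):=F(t)+F(s-t)-F(s)$; on any maximal open subinterval of $(0,s)$ where $\phi\neq 0$, your dichotomy forces $F(t)=F(s-t)$ throughout, hence (one factor nondecreasing in $t$, the other nonincreasing) $F$ is constant there and so is $\phi$, while continuity forces $\phi$ to vanish at the endpoints of that interval (and $\phi(0)=0$), a contradiction. Hence $F$ is additive on the triangle, $F(u)=u$, and $f\equiv 1$ as you claim.
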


\begin{proof}
Suppose $\bU$ has an Archimedean copula, so that $C_f(\bu) = \psi\left( \sum_{i=1}^d \psi^{-1}(u_i)\right)$ for some Archimedean generator $\psi$. Such a function has a continuous, decreasing inverse on $[0,1]$ which satisfies $\psi^{-1}(1) =  0$. For any index $i \in \{1,\ldots,d\}$, the distribution of the marginal vector $\bU_{-i}$ is given by \[C_f(u_1,\ldots,u_{i-1},1,u_{i+1},\ldots, u_d) = \psi\left( \sum_{j \neq i} \psi^{-1}(u_j)\right) = \prod_{j \neq i} u_j,\]where the second equality follows from \cref{prop:basicproperties}. Thus $\sum_{j \neq i} \psi^{-1}(u_j) = \psi^{-1}\left( \prod_{j \neq i} u_j\right)$, so that $\psi^{-1}$ satisfies Cauchy's logarithmic functional equation, the continuous and decreasing solutions of which are of the form $\psi^{-1}(t) = -c \log(t)$ for $c > 0$ \citep{aczel1966lectures}. Thus $\psi(t) = \exp(-t/c)$, and it follows that $C_f(\bu) = \prod_{i=1}^d u_i = \Pi(\bu)$.
\end{proof}

Our next result concerns the smoothness of the partial derivatives of $C_f$.

\begin{prop} \label{prop:pd}
    
    For every $f \in \cF_{[0, 1]}$ and each $j \in \{1, \dots, d\}$, the $j$th partial derivative $\dot C_{f, j}$ of $C_f$ exists and is continuous on the set $\cV_j := \{\bu \in [0, 1]^d: u_j \in (0, 1)\}$. If $d \geq 3$, then for each pair $(i, j) \in \{1, \dots, d\}^2$, the second order partial derivative $\Ddot C_{f, ij}$ exists and is continuous on $\cV_i \cap \cV_j$, and $|\Ddot C_{f, ij}(\bu)| \leq 1$.
    
\end{prop}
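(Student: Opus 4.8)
The plan is to obtain a clean closed form for $C_f$ and read everything off from it. Let $\bar f \colon \R \to [0,\infty)$ be the $1$‑periodic extension of $f$, so $\bar f(x) = f(x \Mod 1)$; set $\Phi_0 := \bar f$, $\Phi_1(x) := \int_0^x \bar f(t)\dif t$, and $\Phi_k(x) := \int_0^x \Phi_{k-1}(t)\dif t$ for $k \geq 2$. Since $\bar f$ is locally integrable, $\Phi_1$ is continuous, and inductively $\Phi_k \in C^{k-1}(\R)$ with $\Phi_k' = \Phi_{k-1}$ for $k \geq 2$; in particular $\Phi_d \in C^{d-1}(\R)$ with $\Phi_d^{(k)} = \Phi_{d-k}$ for $0 \leq k \leq d-1$. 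Using that $C_f(\bu) = \int_{[\bzero,\bu]} \bar f(t_1 + \dots + t_d)\dif t_1 \cdots \dif t_d$ (by \cref{prop:copula} and groundedness), I would integrate in $t_d, t_{d-1}, \dots, t_1$ one variable at a time, repeatedly invoking $\int_0^b \bar f(a+t)\dif t = \Phi_1(a+b) - \Phi_1(a)$ and then $\Phi_k(y+u) - \Phi_k(y) = \int_0^u \Phi_{k-1}(y+t)\dif t$, to arrive at
\[
    C_f(\bu) = \sum_{S \subseteq \{1, \dots, d\}} (-1)^{d - |S|}\, \Phi_d\Bigl( \sum_{h \in S} u_h \Bigr), \qquad \bu \in [0,1]^d.
\]

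Continuity of the partial derivatives is then immediate. On $\cV_j$ every summand above is a genuinely (two‑sided) differentiable function of the linear form $\sum_{h \in S} u_h$, so differentiating termwise in $u_j$ gives
\[
    \dot C_{f,j}(\bu) = \sum_{S \ni j} (-1)^{d-|S|}\, \Phi_{d-1}\Bigl(\sum_{h \in S} u_h\Bigr),
\]
which is continuous since $\Phi_{d-1} \in C^{d-2}(\R) \subseteq C^0(\R)$ (here $d \geq 2$). If $d \geq 3$, then $\Phi_{d-1} \in C^1(\R)$, so I may differentiate once more in $u_i$ for any $i$, obtaining
\[
    \Ddot C_{f,ij}(\bu) = \sum_{S \ni i,\ S \ni j} (-1)^{d-|S|}\, \Phi_{d-2}\Bigl(\sum_{h \in S} u_h\Bigr)
\]
(read as $\sum_{S \ni j}$ when $i = j$), which is continuous on $\cV_i \cap \cV_j$ because $\Phi_{d-2} \in C^{d-3}(\R) \subseteq C^0(\R)$ (here $d \geq 3$). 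For $d = 2$ the same formal computation would instead produce $\bar f$, which is only integrable, matching the hypothesis $d \geq 3$; and in fact these arguments already give continuity on all of $[0,1]^d$ if one admits one‑sided boundary derivatives, so the restriction to $\cV_j$ and $\cV_i \cap \cV_j$ is purely a convenience.

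For the bound $|\Ddot C_{f,ij}(\bu)| \leq 1$ I would undo the last $d-2$ applications of $\Phi_k(y+u)-\Phi_k(y) = \int_0^u \Phi_{k-1}(y+t)\dif t$, exposing the non‑negative function $\bar f$ itself. For $i \neq j$ this yields
\[
    \Ddot C_{f,ij}(\bu) = \int_{\prod_{h \notin \{i,j\}}[0,u_h]} \bar f\Bigl( u_i + u_j + \sum_{h \notin \{i,j\}} t_h \Bigr) \prod_{h \notin \{i,j\}} \dif t_h,
\]
a non‑negative integral over a box with $d-2 \geq 1$ factors; integrating out one coordinate $h_0 \notin \{i,j\}$ contributes $\int_0^{u_{h_0}} \bar f(c+t)\dif t \leq \int_0^1 \bar f(c+t)\dif t = 1$ (periodicity of $\bar f$ together with $\int_0^1 f = 1$, essentially \cref{eq:integrateto1}), and each remaining integration contributes a factor $\leq u_h \leq 1$, so $0 \leq \Ddot C_{f,ij}(\bu) \leq 1$. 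For $i = j$ the same manipulation leaves $\Ddot C_{f,jj}(\bu)$ as a \emph{difference} of two integrals of this type over $d-2$ coordinates, each lying in $[0,1]$, whence $|\Ddot C_{f,jj}(\bu)| \leq 1$.

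The routine parts are the closed form for $C_f$ and reading off continuity from the smoothness of $\Phi_d$. The main obstacle is the bound: it forces one to convert the second‑order derivative back into an honest integral of $f$, and care is needed because $\Phi_{d-1}$ and $\Phi_{d-2}$ cannot be differentiated once more — the ``finite difference equals iterated integral of the top object'' identity must be applied at precisely the level where that top object is only the merely integrable $\bar f$. The $i = j$ case is the more delicate one, since there the derivative is a difference of two such integrals rather than a single one, which is exactly why only $|\,\cdot\,| \leq 1$ (and not $0 \leq \cdot \leq 1$) is asserted. A more computational route avoiding the tower $\Phi_2, \Phi_3, \dots$ would differentiate one coordinate at a time via the fundamental theorem of calculus, establishing continuity of the successive partial‑integral functions by dominated convergence after peeling off a single coordinate so as to land on the continuous function $\Phi_1$; this is essentially the same argument with the bookkeeping rearranged.
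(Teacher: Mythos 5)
Your argument is correct, and it reaches the same final integral representations for $\dot C_{f,j}$ and $\Ddot C_{f,ij}$ as the paper, but by a genuinely different route. The paper never writes down a closed form for $C_f$: it proves a standalone continuity lemma (\cref{lemm:continuity}) showing that the integral of $c_f$ over a box $[0,\bu_J]$ is continuous both in the upper limits $\bu_J$ and in the frozen coordinates $\bv_{-J}$ (the former via integrability and continuity from above of Lebesgue measure, the latter by converting a perturbation of $\bv_{-J}$ into a translation of the box), then applies Tonelli and the fundamental theorem of calculus once per differentiation; the diagonal case $\Ddot C_{f,jj}$ is handled by the same translation trick, turning a shift of $u_1$ into a shift of the $v_2$-integration limits, which produces exactly your difference of two $(d-2)$-dimensional integrals. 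Your inclusion--exclusion identity $C_f(\bu) = \sum_{S} (-1)^{d-|S|}\Phi_d(\sum_{h\in S}u_h)$ packages all of that bookkeeping at once: termwise differentiation makes the continuity claims immediate from $\Phi_k \in C^{k-1}$, it makes transparent the paper's closing remark that all partial derivatives up to order $d-1$ exist and are continuous, and unwinding the finite differences down to the level of $\bar f$ recovers the paper's integral formulas and hence the bounds by the same ``integrate one coordinate over a period'' argument (essentially \cref{eq:integrateto1}). The one place where your write-up is lighter than it should be is the induction establishing the closed form itself (you state the telescoping identities but do not carry out the induction); this is routine, but it is the step that replaces the paper's \cref{lemm:continuity}, so in a full write-up it deserves a displayed induction. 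The trade-off is that the paper's lemma-based route stays closer to the probabilistic meaning of the derivatives (conditional distribution functions of sub-vectors), while yours is more algebraic and generalizes more cleanly to higher-order derivatives.
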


\begin{rem}
It is a standard fact that wherever they exist, the first order partial derivatives of any copula are bounded by 1. That is, for any copula $C$, we have $0 \leq \dot C_{j}(\bu) \leq 1$ for all $j \in \{1,\ldots,d\}$, which follows directly from the monotonicity and 1-Lipschitz properties. However, the boundedness of \emph{second order} partial derivatives, even assuming their existence, does not hold in general. Consider, for example, a trivariate extension of entry 4.2.9 in the well-known list of one-parameter Archimedean copulas found in Table 4.1 of \cite{nelsen2007introduction}, given by $C_{\theta}(\bu) = u_1 u_2 u_3 \exp\left(-\theta \log(u_1)\log(u_2)\log(u_3)\right)$ where $\theta \in (0,1]$. This is a valid copula when $\theta \leq (3-\sqrt{5})/2$, for then its Archimedean generator satisfies the $3$-monotonicity property \citep[][Theorem 2.2]{mcneil2009multivariate}. An easy calculation shows that for fixed, positive $u_2$ and $u_3$,
\[
    \Ddot C_{\theta, 11}(\bu) \propto u_1^{-1 - \theta\log(u_2)\log(u_3)}
\]
which is unbounded around $u_1=0$.
\end{rem}

Before proving \cref{prop:pd}, we briefly comment on the applicability of the result to statistical inference. Given independent and identically distributed observations with continuous marginals and copula $C$, the \emph{empirical copula} is the canonical nonparametric estimator of $C$. It is given by
\[
    \hat C_n(\bu) := \frac{1}{n} \sum_{k=1}^n \Ind{R_k^1 \leq nu_1, \dots, R_k^d \leq nu_d},
\]
where $R_k^j$ denotes the rank of the $k$th observation of the $j$th variable among all observations of that variable. It is well known that at a point $\bu \in (0, 1)^d$, $\sqrt{n} (\hat C_n(\bu) - C(\bu))$ is asymptotically normal if and only if the partial derivatives of $C$ exist and are continuous at $\bu$. In fact, \cite{segers2012asymptotics} proved that the \emph{empirical copula process} $\sqrt{n}(\hat C_n - C)$ converges in distribution to a Gaussian process in $\ell^\infty([0, 1]^d)$ if and only if each partial derivative $\dot C_j$ exists and is continuous on $\cV_j$ (see Proposition 3.1 therein). \cite{RWZ17} consider the empirical copula process indexed by a class of right-continuous functions of uniformly bounded variation. There again, existence and continuity of the partial derivatives on the sets $\cV_j$ ensures uniform weak convergence of the empirical copula process to a tight Gaussian process (see their Theorem 5). In \cite{berghaus2017weak}, in addition to the existence and continuity of the first order partial derivatives, the second order partial derivatives are assumed to exist, to be continuous and to satisfy a certain bound. The empirical copula process is then shown to converge in distribution to a tight Gaussian process in weighted metrics, strengthening the result of \cite{segers2012asymptotics} (see their Theorem 2.2).

\cite{segers2017empirical} introduce smoothed versions of the empirical copula. Asymptotic normality of the so-called \emph{empirical beta copula} and \emph{empirical Bernstein copula} processes is shown to hold if the first order partial derivatives exist, are continuous everywhere, and satisfy a local Lipschitz property (see Theorem 3.6 therein). The latter holds, for instance, if the second order partial derivatives exist and are bounded everywhere.

\Cref{prop:pd} guarantees that all of the aforementioned results hold when the data is distributed according to any copula in $\cC$, regardless of the generator --- in particular, they hold for the copula $C_{f^*}$ with nowhere continuous and nowhere bounded density, as constructed in \cref{sub:univariate}. Interestingly, multiple copulas appear smoother than $C_{f^*}$ at first sight, but do not have continuous partial derivatives. For example, the so-called \emph{checkerboard copula} on $[0, 1]^2$ has density $c := 2\One_{[0, 1/2]^2 \cup [1/2, 1]^2}$, which is bounded and almost everywhere continuous. Yet its partial derivatives are not continuous everywhere, so asymptotic normality of the associated empirical copula process fails.

The key to the proof of \cref{prop:pd} is the following.

\begin{lemm} \label{lemm:continuity}
    
    Let $f \in \cF_{[0, 1]}$ and $J \subseteq \{1, \dots, d\}$ be nonempty. Then for any values of $\bu_J \in \R^{|J|}$ and $\bv_{-J} \in \R^{d - |J|}$,
    \[
        \int_{[0, \bu_J]} f\left( \sum_{i=1}^d v_i \Mod{1} \right) \dif \bv_J = \int_{[0, \bu_J]} f\left( \sum_{i \in J} v_i + \sum_{i \notin J} v_i \Mod{1} \right) \dif \bv_J
    \]
    is continuous, both in $\bu_J$ and in $\bv_{-J}$.
    
\end{lemm}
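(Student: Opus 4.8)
The plan is to derive a closed-form expression for the integral as a finite sum of \emph{iterated antiderivatives of $f$} evaluated at affine functions of $(\bu_J,\bv_{-J})$; this will yield joint continuity in $(\bu_J,\bv_{-J})$, which is stronger than the separate continuities in the statement and trivially implies them.

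First I would extend $f$ to a $1$-periodic function $\tilde f$ on $\R$ by $\tilde f(x) := f(x \Mod{1})$. This $\tilde f$ is nonnegative and locally integrable, and with this convention both integrands in the statement coincide with $\tilde f\!\left(\sum_{i=1}^d v_i\right)$, since $\sum_{i\in J}v_i + \bigl(\sum_{i\notin J}v_i \Mod{1}\bigr)$ differs from $\sum_{i=1}^d v_i$ by an integer. Next I would introduce the tower of antiderivatives $F_0 := \tilde f$ and $F_k(t) := \int_0^t F_{k-1}(s)\dif s$ for $k\geq 1$ and $t\in\R$. Then $F_1$ is continuous, being an indefinite integral of an $L^1_{\mathrm{loc}}$ function, and inductively $F_{k+1}\in C^1(\R)$ for $k\geq 1$; in particular $F_k$ is continuous for every $k\geq 1$.

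The main step is to show, by induction on $|J|$, that for every nonempty $J$ and all $(\bu_J,\bv_{-J})$,
\[
    \int_{[0,\bu_J]} \tilde f\!\left( \sum_{i=1}^d v_i \right) \dif\bv_J \;=\; \sum_{S\subseteq J} (-1)^{|J|-|S|}\, F_{|J|}\!\left( \sum_{i\in S} u_i + \sum_{i\notin J} v_i \right).
\]
Since the integrand is nonnegative, Tonelli's theorem permits integrating out the variables $\{v_i : i\in J\}$ one at a time in any order. The base case $|J|=\{j\}$ is $\int_0^{u_j}\tilde f(v_j+c)\dif v_j = F_1(u_j+c)-F_1(c)$ with $c:=\sum_{i\notin J}v_i$, using only additivity of the integral and the substitution $s = v_j+c$. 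For the inductive step I would single out an index $j\in J$, apply the inductive hypothesis to the inner integral over $\{v_i : i\in J\setminus\{j\}\}$ with the constant $c$ replaced by $c+v_j$, and then integrate the resulting sum of $F_{|J|-1}$-terms over $v_j\in[0,u_j]$ via $\int_0^{u_j} F_{k}(a+v_j)\dif v_j = F_{k+1}(a+u_j)-F_{k+1}(a)$; collecting the $2^{|J|}$ terms and tracking signs produces the displayed formula (and in particular shows the left-hand side is finite).

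Finally, since $|J|\geq 1$, the function $F_{|J|}$ is continuous on $\R$, and each argument $\sum_{i\in S}u_i + \sum_{i\notin J}v_i$ is affine, hence continuous, in $(\bu_J,\bv_{-J})$; thus the right-hand side is a finite sum of compositions of continuous functions and is jointly continuous, which gives the claim. The only mildly delicate points are the bookkeeping in the periodic-extension step (checking that the two integrands genuinely agree) and tracking subsets and signs in the induction; no analytic difficulty arises once the tower $F_k$ is in place, because each integration against $f$ buys one degree of regularity and it suffices to spend a single one.
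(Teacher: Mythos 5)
Your proof is correct, but it takes a genuinely different route from the paper's. The paper argues softly: it first notes (reusing the change of variable from the proof of Proposition~\ref{prop:copula}) that the integrand is integrable over compact sets, so continuity in $\bu_J$ follows from absolute continuity of the Lebesgue integral; it then handles continuity in $\bv_{-J}$ by observing that perturbing $\bv_{-J}$ by $\bdelta_{-J}$ only shifts the argument of $f$ by $\delta = \sum_{i \notin J}\delta_i$, which can be absorbed as a translation of the integration rectangle along a single coordinate direction $i^* \in J$, reducing the second claim to the first. You instead extend $f$ periodically and derive an explicit inclusion--exclusion identity expressing the integral as $\sum_{S\subseteq J}(-1)^{|J|-|S|}F_{|J|}\bigl(\sum_{i\in S}u_i+\sum_{i\notin J}v_i\bigr)$, where $F_k$ is the $k$-fold antiderivative of the periodic extension; I checked the base case and the inductive step, and the formula is right (the only care needed is that for $|J|=1$ the single integration already lands on the continuous $F_1$, which is exactly the one degree of regularity you spend). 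Your approach buys more: it gives \emph{joint} continuity in $(\bu_J,\bv_{-J})$ rather than separate continuity, and the closed form could be reused downstream --- it essentially hands you the expressions for $\dot C_{f,j}$ and $\Ddot C_{f,ij}$ in Proposition~\ref{prop:pd} for free, and even exhibits $C_f$ itself as a signed sum of $F_d$ at affine arguments. The cost is the combinatorial bookkeeping and the need to interpret $\int_0^{u_j}$ as an oriented integral when components of $\bu_J$ are negative (the lemma allows $\bu_J\in\R^{|J|}$); the paper's translation trick sidesteps all of that at the price of proving only the separate continuities it actually needs.
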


\begin{proof}
    
    First note that in the proof of \cref{prop:copula}, we show that
    \[
        f\left( \sum_{i=1}^d v_i \Mod{1} \right)
    \]
    is integrable on $[0, 1]^d$. By using the same strategy, one can show that it is also integrable on arbitrary compact subsets of $\R^d$. The same can be said if we are integrating only with respect to the subset of variables $\bv_J$. Continuity in $\bu_J$ thus follows from continuity from above of Lebesgue measure.
    
    To establish continuity in the direction of $\bv_{-J}$, we show that an infinitesimal change in $\bv_{-J}$ is equivalent to an infinitesimal change in some component of $\bu_J$; we then simply apply the first part of the present result. Indeed, for a small vector $\bdelta \in \R^d$, adding $\bdelta_{-J}$ to $\bv_{-J}$ shifts $\sum_{i=1}^d v_i$ to
    \[
        \sum_{i \notin J} (v_i + \delta_i) + \sum_{i \in J} v_i = \sum_{i \neq i^*} v_i + (v_{i^*} + \delta),
    \]
    for some arbitrary $i^* \in J$ and for $\delta := \sum_{i \notin J} \delta_i$. Then
    \begin{align*}
        \int_{[0, \bu_J]} f\left( \sum_{i \notin J} (v_i + \delta_i) + \sum_{i \in J} v_i \Mod{1} \right) \dif \bv_J &= \int_{[0, \bu_J]} f\left( \sum_{i \neq i^*} v_i + (v_{i^*} + \delta) \Mod{1} \right) \dif \bv_J
        \\
        &= \int_{[\delta \be_{i^*}, \bu_J + \delta \be_{i^*}]} f\left( \sum_{i=1}^d v_i \Mod{1} \right) \dif \bv_J
        \\
        &\too \int_{[0, \bu_J]} f\left( \sum_{i=1}^d v_i \Mod{1} \right) \dif \bv_J
    \end{align*}
    as $\delta \to 0$, by the first part of the result.
\end{proof}

\begin{proof}[Proof of \cref{prop:pd}]We consider the first and second order partial derivatives separately. 
    
    \textbf{First order partial derivatives.} By exchangeability, we may without loss of generality take $j=1$. Let $\bu \in \cV_1$. By Tonelli's theorem and the fundamental theorem of calculus,
    \[
        \dot C_{f, 1}(\bu) := \frac{\partial}{\partial u_1} \int_0^{u_1} \int_{[0, \bu_{-1}]} c_f(v_1, \bv_{-1}) \dif \bv_{-1} \dif v_1 = \int_{[0, \bu_{-1}]} c_f(u_1, \bv_{-1}) \dif \bv_{-1},
    \]
    where we write $c_f(u_1, \bv_{-1})$ for $c_f$ evaluated at the vector $(u_1, \bv_{-1})$. Note that the application of the fundamental theorem of calculus in the second equality requires continuity of the right-hand side in $u_1$. This is a consequence of \cref{lemm:continuity}, which at the same time asserts that $\dot C_{f, 1}$ is continuous at $\bu$.

    \textbf{Second order partial derivatives.} Now suppose $d \geq 3$. Again by exchangeability, it suffices to treat $\Ddot C_{f, 12}$ and $\Ddot C_{f, 11}$. Using the expression for $\dot C_{f, 1}$ above,
    \[
        \Ddot C_{f, 12}(\bu) := \frac{\partial}{\partial u_2} \int_0^{u_2} \int_{\left[ 0, \bu_{-\{1, 2\}} \right]} c_f(u_1, v_2, \bv_{-\{1, 2\}}) \dif \bv_{-\{1, 2\}} \dif v_2 = \int_{\left[ 0, \bu_{-\{1, 2\}} \right]} c_f(u_1, v_2, \bv_{-\{1, 2\}}) \dif \bv_{-\{1, 2\}},
    \]
    which is continuous in $\bu$. As before, we have used Tonelli's theorem, the fundamental theorem of calculus and \cref{lemm:continuity}.
    
    We now evaluate $\Ddot C_{f, 11}$. Let $u \in \cV_1$, $\delta \in \R$. We have
    \begin{align*}
        \dot C_{f, 1}(\bu + \delta \be_1) &= \int_0^{u_2} \int_{\left[ 0, \bu_{-\{1, 2\}} \right]} c_f(u_1 + \delta, v_2, \bv_{-\{1, 2\}}) \dif \bv_{-\{1, 2\}} \dif v_2
        \\
        &= \int_0^{u_2} \int_{\left[ 0, \bu_{-\{1, 2\}} \right]} c_f(u_1, v_2 + \delta, \bv_{-\{1, 2\}}) \dif \bv_{-\{1, 2\}} \dif v_2
        \\
        &= \int_\delta^{u_2 + \delta} \int_{\left[ 0, \bu_{-\{1, 2\}} \right]} c_f(u_1, v_2, \bv_{-\{1, 2\}}) \dif \bv_{-\{1, 2\}} \dif v_2.
    \end{align*}
    In case $\delta < 0$ or $u_2 + \delta > 1$, the definition of $c_f$ in \cref{eq:ourcopula} can be trivially extended outside $[0, 1]^d$. Deduce that $\Ddot C_{f, 11}(\bu)$ exists and is given by
    \[
        \lim_{\delta \to 0} \frac{\dot C_{f, 1}(\bu + \delta \be_1) - \dot C_{f, 1}(\bu)}{\delta} = \int_{\left[ 0, \bu_{-\{1, 2\}} \right]} c_f(u_1, u_2, \bv_{-\{1, 2\}}) \dif \bv_{-\{1, 2\}} - \int_{\left[ 0, \bu_{-\{1, 2\}} \right]} c_f(u_1, 0, \bv_{-\{1, 2\}}) \dif \bv_{-\{1, 2\}},
    \]
    which is continuous in $\bu$ by \cref{lemm:continuity}. As discussed in the proof thereof, $c_f(u_1, u_2, \cdot)$ is a density on $[0, 1]^{d-2}$. It follows that $\Ddot C_{f, 12}(\bu) \in [0, 1]$ and $\Ddot C_{f, 11}(\bu) \in [-1, 1]$, which completes the proof.
\end{proof}

\begin{rem}
In proving \cref{prop:pd}, we find that no matter how ``rough'' $c_f$ is, as long as it is integrated with respect to at least one variable, the resulting antiderivative is continuous. Since differentiating the copula essentially removes one integral, the resulting derivative exists and is continuous as long as at least one integral remains. By iterating the process, one can show that all the partial derivatives of $C_f$ of order up to $d-1$ exist and that they are continuous and bounded on suitable domains.
\end{rem}

We now examine a traditional measure of concordance for bivariate copulas, Spearman's $\rho$. It is certain that no copula in $\cC$ can have Spearman's $\rho$ too close to 1, since no such copula approaches the comonotonicity copula. The countermonotonicity copula, however, can be approached by choosing a generator $f$ whose support approaches $\{0, 1\}$. In fact, the range of values of Spearman's $\rho$ for the copulas in $\cC$ is found to be exactly $(-1, 1/2)$.

\begin{prop}\label{prop:spearman}
    
    If $d=2$, then for every $f \in \cF_{[0, 1]}$, the copula $C_f$ has
    \[
        \rho_{C_f} = 6\EE_{X \sim f}\left[X(1-X)\right] - 1 \in \left(-1, \frac{1}{2}\right),
    \]
    where both limiting values $-1$ and $1/2$ can be approached.
    
\end{prop}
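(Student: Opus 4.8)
The plan is to start from the standard integral representation of Spearman's rho for a bivariate copula $C$, namely
$\rho_C = 12\int_{[0,1]^2} C(u,v)\dif u\dif v - 3 = 12\,\EE[U_1 U_2] - 3$ when $(U_1,U_2)\sim C$ (see, e.g., \cite{nelsen2007introduction}), and to compute $\EE[U_1 U_2]$ directly from the density $c_f(u,v) = f\big((u+v)\Mod 1\big)$.

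To evaluate $\int_{[0,1]^2} uv\, f\big((u+v)\Mod 1\big)\dif u\dif v$, I would split the unit square along the anti-diagonal into the triangles $\{u+v<1\}$ and $\{u+v>1\}$, on which $(u+v)\Mod 1$ equals $u+v$ and $u+v-1$ respectively. On each piece I substitute $s := (u+v)\Mod 1 \in (0,1)$ and integrate the remaining variable over the appropriate segment; the inner integrals $\int_0^s u(s-u)\dif u = s^3/6$ and $\int_s^1 u(s+1-u)\dif u = (1+3s-3s^2-s^3)/6$ combine to give $\int_0^1 \tfrac{1+3s-3s^2}{6}\,f(s)\dif s = \tfrac16 + \tfrac12\, \EE_{X\sim f}[X(1-X)]$. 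Substituting back yields $\rho_{C_f} = 6\,\EE_{X\sim f}[X(1-X)] - 1$. (Alternatively, the same computation can be organized via the stochastic representation of \cref{prop:basicproperties}, conditioning on $W := (U_1+U_2)\Mod 1 \sim f$, which is independent of the uniform variable $U_2$.) This bookkeeping is the main technical chore, though it is entirely elementary.

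For the range, I would analyze the functional $f \mapsto m(f) := \EE_{X\sim f}[X(1-X)]$. Since $g(x):=x(1-x)$ satisfies $0 \le g(x)\le 1/4$ on $[0,1]$ with $g>0$ on $(0,1)$ and $g(x) = 1/4$ only at $x=1/2$, one gets $m(f)\in(0,1/4)$ for every density $f$, with neither endpoint attainable --- equality would force $f$ to concentrate on the Lebesgue-null sets $\{0,1\}$ or $\{1/2\}$, contradicting $\int_0^1 f = 1$. To see that the endpoints are approached, I would take $f_n$ uniform on $[0,1/n]\cup[1-1/n,1]$, so that $g \le 1/n$ on its support and hence $m(f_n)\le 1/n \to 0$, and $f_n$ uniform on $[\tfrac12-\tfrac1n,\tfrac12+\tfrac1n]$, for which $m(f_n)= 1/4 - \tfrac1{3n^2}\to 1/4$. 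Finally, to show every intermediate value is hit, I would fix a target $c\in(0,1/4)$, pick two such densities $f^{(0)}, f^{(1)}$ with $m(f^{(0)})<c<m(f^{(1)})$, and apply the intermediate value theorem to the affine map $s \mapsto m\big((1-s)f^{(0)} + s f^{(1)}\big)$ on $[0,1]$, noting that $(1-s)f^{(0)} + sf^{(1)} \in \cF_{[0,1]}$ throughout. Transporting these conclusions through the identity $\rho_{C_f} = 6\,m(f) - 1$ shows that $\rho_{C_f}$ ranges over exactly $(-1,1/2)$, with both endpoints approached but not attained. The only place requiring a moment's care is this last interval-filling argument; everything else is routine.
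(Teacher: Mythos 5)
Your proposal is correct and follows essentially the same route as the paper: both start from Nelsen's integral formula $\rho_{C_f}=12\int u_1u_2\,c_f(\bu)\dif\bu-3$, reduce the double integral to $\tfrac16+\tfrac12\EE_{X\sim f}[X(1-X)]$ by an elementary change of variables (you parametrize by the level sets of $(u+v)\Mod 1$, the paper by an affine substitution in the inner integral --- a cosmetic difference), and then bound $\EE[X(1-X)]\in(0,1/4)$ with densities concentrating near $\{0,1\}$ or near $1/2$ to approach the endpoints. Your added observations --- that strictness holds because equality would force $f$ to concentrate on a Lebesgue-null set, and the convexity/IVT argument showing every value in $(-1,1/2)$ is attained --- are correct and slightly more complete than the paper's proof, which leaves both points implicit.
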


\begin{proof}
    
    According to Theorem 5.1.6 in \cite{nelsen2007introduction},
    \begin{equation} \label{eq:rho}
        \rho_{C_f} = 12 \int_{[0, 1]} u_1 u_2 c_f(\bu) \dif \bu - 3.
    \end{equation}
    The integral can be written as
    \begin{align*}
        &\int_0^1 \int_0^1 u_1 u_2 f(u_1 + u_2 \Mod 1) \dif u_2 \dif u_1
        \\
        &\quad = \int_0^1 u_1 \left\{ \int_0^{1-u_1} u_2 f(u_1 + u_2) \dif u_2 + \int_{1-u_1}^1 u_2 f(u_1 + u_2 - 1) \dif u_2 \right\} \dif u_1
        \\
        &\quad = \int_0^1 u_1 \left\{ \int_{u_1}^1 (u_2 - u_1) f(u_2) \dif u_2 + \int_0^{u_1} (u_2 - u_1 + 1) f(u_2) \dif u_2 \right\} \dif u_1
        \\
        &\quad = \int_0^1 u_1 \dif u_1 \int_0^1 u_2 f(u_2) \dif u_2 - \int_0^1 u_1^2 \dif u_1 \int_0^1 f(u_2) \dif u_2 + \int_0^1 F(u_1) \dif u_1
        \\
        &\quad = \frac{1}{2} \int_0^1 u f(u) \dif u - \frac{1}{2} \int_0^1 u^2 f(u) \dif u + \frac{1}{6},
    \end{align*}
    where we have used the fact that for a positive integer $p$,
    \[
        p \int_0^1 u^{p-1} (1 - F(u)) \dif u = \int_0^1 u^p f(u) \dif u
    \]
    which is the $p$th moment of the distribution $f$. Inserting this into \cref{eq:rho} yields
    \[
        \rho_{C_f} = 6\int_0^1 u(1-u) f(u) \dif u - 1,
    \]
    which is the desired expression.
    
    For the bounds, we note that $0 \leq x(1-x) \leq 1/4$, and that these bounds are attained at $x \in \{0, 1\}$ and at $x=1/2$, respectively. Letting the density $f$ put all of its mass arbitrarily close to $0$ or $1$ will yield $\EE_{X \sim f}\left[X(1-X)\right]$ arbitrarily small, whereas if $f$ has all its mass arbitrarily close to $1/2$, that quantity will approach its maximum of $1/4$.
\end{proof}

For a random vector with copula $C_f$, no subvector has positive tail dependence. There is, however, no limit to the amount of negative tail dependence that it can have; for instance, if $f$ assigns no mass to a neighborhood of $0$, then $C_f$ has perfect negative lower tail dependence. The following result is stated in terms of lower tail dependence; an analogous statement holds for the upper tail.

\begin{prop} \label{prop:tail}
    
    For every $f \in \cF_{[0, 1]}$, we have
    \[
        C_f(t, \dots, t) = o\big( t^{d-1} \big), \quad t \downarrow 0.
    \]
    
\end{prop}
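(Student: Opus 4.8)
The plan is to exploit the fact that the modular reduction defining $c_f$ has no effect near the origin, and then to integrate out a single coordinate so as to expose a factor that tends to $0$ faster than $t^{d-1}$.

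First I would restrict to $t < 1/d$; then for every $\bu \in [0, t]^d$ we have $\sum_{j=1}^d u_j \in [0, dt) \subseteq [0, 1)$, so that $\sum_{j=1}^d u_j \Mod 1 = \sum_{j=1}^d u_j$. Using the definition of $c_f$ in \cref{eq:ourcopula} together with Tonelli's theorem to integrate out $u_d$ last,
\[
    C_f(t, \dots, t) = \int_{[0, t]^d} f\!\left( \sum_{j=1}^d u_j \right) \dif \bu = \int_{[0, t]^{d-1}} \left( \int_0^t f\!\left( u_d + \sum_{j=1}^{d-1} u_j \right) \dif u_d \right) \dif \bu_{-d}.
\]
For fixed $\bu_{-d} \in [0, t]^{d-1}$ put $s := \sum_{j=1}^{d-1} u_j \in [0, (d-1)t]$; the affine change of variable $w = u_d + s$ turns the inner integral into $\int_s^{s + t} f(w) \dif w$, and since $[s, s+t] \subseteq [0, dt]$ this is at most $\int_0^{dt} f(w) \dif w$. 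As the outer integral runs over a set of Lebesgue measure $t^{d-1}$, we obtain $0 \le C_f(t, \dots, t) \le t^{d-1} \int_0^{dt} f(w) \dif w$.

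To conclude, I would invoke integrability of $f$: the map $\varepsilon \mapsto \int_0^\varepsilon f(w) \dif w$ is continuous with value $0$ at $\varepsilon = 0$ (equivalently, this follows from dominated convergence), so $\int_0^{dt} f(w) \dif w \to 0$ as $t \downarrow 0$. Hence $C_f(t, \dots, t) \le t^{d-1} \cdot o(1) = o(t^{d-1})$, as claimed. I do not expect a genuine obstacle here; the only steps requiring a moment's care are verifying that the translated interval $[s, s+t]$ still lies inside $[0, dt]$ (so that the mass of $f$ it carries is bounded uniformly in $\bu_{-d}$) and that $\int_0^{dt} f \to 0$, both of which are routine. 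An alternative route would bound the $(d-1)$-dimensional density of $\sum_{j=1}^d u_j$ on $[0,t]^d$ by $s^{d-1}/(d-1)!$, but the one-coordinate integration above is cleaner.
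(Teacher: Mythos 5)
Your proof is correct. The core idea is the same as the paper's: on $[0,t]^d$ with $t<1/d$ the modular reduction is vacuous, so only the mass $\int_0^{dt} f$ matters, and the conclusion follows from a bound of the form $C_f(t,\dots,t) \le K\, t^{d-1} \int_0^{dt} f(w)\dif w$ together with absolute continuity of the integral. Where you differ is in how the factor $t^{d-1}$ is extracted. The paper enlarges $[0,t]^d$ to the cone $[0,td]\cdot\Delta^{d-1}$ and changes variables to the level sets of $\bu \mapsto \sum_j u_j$, obtaining $\frac{1}{(d-1)!}\int_0^{td} r^{d-1} f(r)\dif r$ and then bounding $r^{d-1} \le (td)^{d-1}$; you instead apply Tonelli to integrate out a single coordinate, bound the inner one-dimensional integral uniformly by $\int_0^{dt} f$, and multiply by the Lebesgue measure $t^{d-1}$ of the base $[0,t]^{d-1}$. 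Your route is more elementary (no simplex volume computation) and yields the sharper constant $K=1$ in place of $d^{d-1}/(d-1)!$, while the paper's radial decomposition makes explicit that the bound really comes from the weighted integral $\int_0^{td} r^{d-1}f(r)\dif r$, which is slightly more information than needed here. All the steps you flag as requiring care (the inclusion $[s,s+t]\subseteq[0,dt]$ and the fact that $\int_0^{dt} f \to 0$) are handled correctly.
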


\begin{proof}
    
    For simplicity, assume that $t \leq 1/d$. First, note that the hypercube $[0, t]^d$ is included in the region $[0, td] \cdot \Delta^{d-1} = \{r\Delta^{d-1}: 0 \leq r \leq td\}$, where $\Delta^{d-1} := \{\bu \in [0, 1]^d: \sum_j u_j = 1\}$ is the unit simplex. Thus
    \[
        C_f(t, \dots, t) \leq \int_{[0, td] \cdot \Delta^{d-1}} c_f(\bu) \dif \bu = \int_{[0, td] \cdot \Delta^{d-1}} f\left( \sum_{j=1}^d u_j \right) \dif \bu = \int_0^{td} f(r) \int_{r\Delta^{d-2}} \dif\bv \dif r,
    \]
    where the last equality comes from the change of variable $r = \sum_j u_j$, $\bv = \bu_{-1}$. The inner integral is $r^{d-1}$ times the volume under the $(d-2)$-dimensional simplex in $[0, 1]^{d-1}$, which is equal to $1/(d-1)!$. Recalling that $f$ is a density and hence integrable around $0$, we conclude that
    \[
        C_f(t, \dots, t) \leq \frac{1}{(d-1)!} \int_0^{td} r^{d-1} f(r) \dif r \leq \frac{d^{d-1}}{(d-1)!} t^{d-1} \int_0^{td} f(r) \dif r = o\big( t^{d-1} \big).
    \]
\end{proof}

\section{Examples of generators}\label{sec:examples}

\subsection{Simple examples}\label{sub:simpleexamples}

To begin with, it is obvious from \cref{eq:ourcopula} that by choosing the generator $f_1 := 1$, we obtain the density $c_{f_1} = 1$ of $\Pi$, as shown in the left panel of \cref{fig:cop_pw}. More generally, we can define a sequence of univariate densities $\{f_n\}$ by
\[
    f_n(x) := \sum_{j=1}^{n} \frac{2j-1}{n} \One_{\left[(j-1)/n, j/n \right]}(x).
\]
Each $f_n$ is piecewise constant, placing uniform mass within each interval $[(j-1)/n, j/n]$. The center panel of \cref{fig:cop_pw} shows $c_{f_{10}}$ for the bivariate case. Generally, when $n \geq 2$ the piecewise constancy of $f_n$ manifests itself in $c_{f_n}$ as $n$ pairs of isosceles trapezoids, each perpendicular to the $z$-axis and separated from one another by a distance of $1/n$.

\begin{figure}[ht]
    \centering
    \includegraphics[scale=0.4]{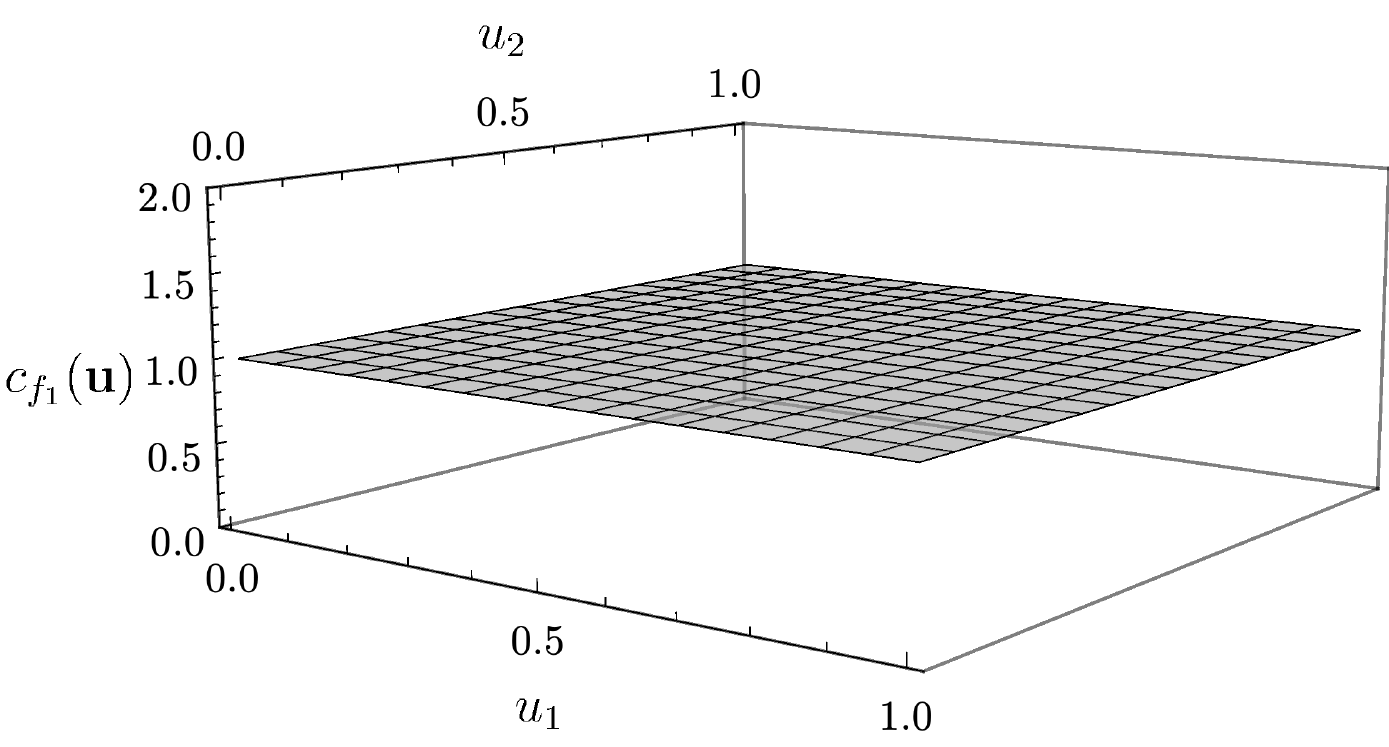}
    \includegraphics[scale=0.4]{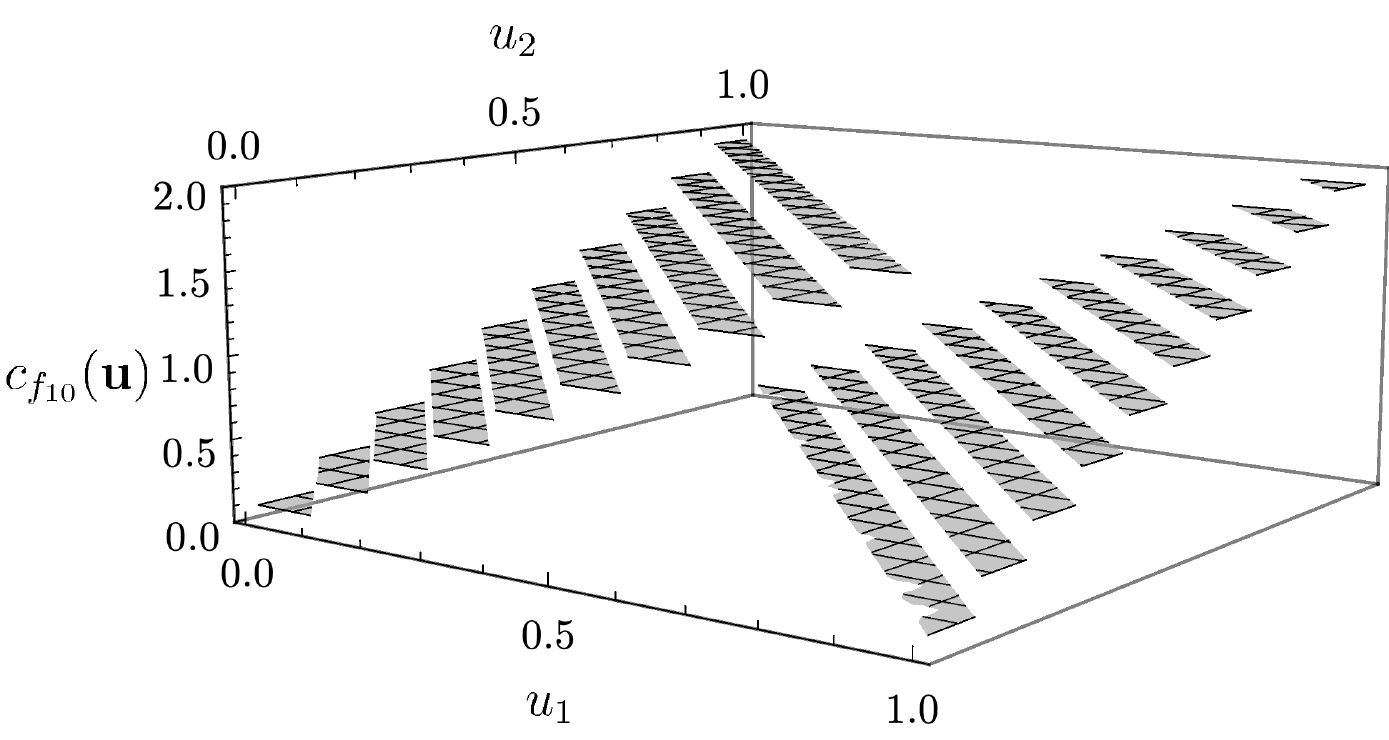}
    \includegraphics[scale=0.4]{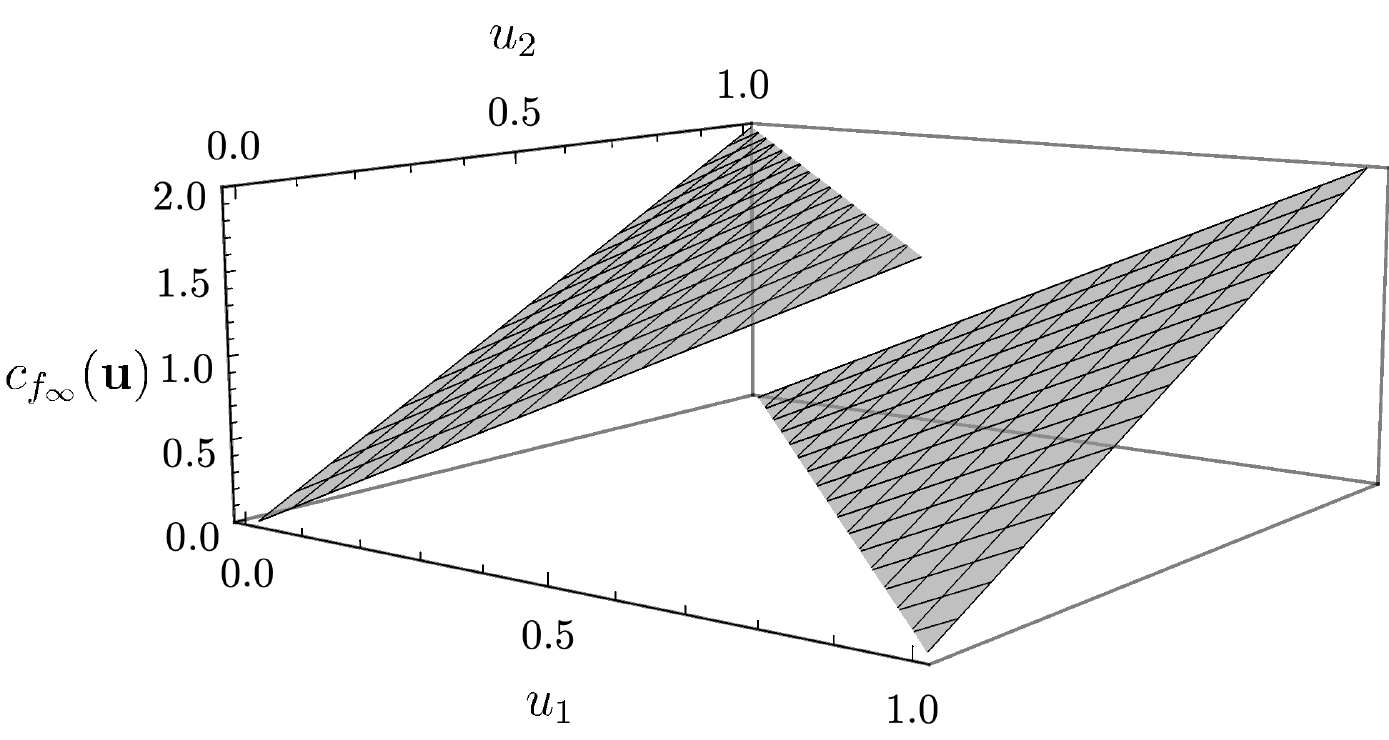}
    \caption{Bivariate copula densities corresponding to the generators $f_1$, $f_{10}$, and $f_\infty.$}
    \label{fig:cop_pw}
\end{figure}

Note that $c_{f_n} \to c_{f_\infty}$ pointwise, where $f_\infty(x) := \lim_{n \to \infty} f_n(x) = 2x$, a special case of the triangular distribution density. By Scheff\'{e}'s lemma, the sequence of copulas $C_{f_n}$ converges weakly to $C_{f_\infty}$. The copula density $c_{f_{\infty}}$ is shown in the right panel of \cref{fig:cop_pw}.

For examples of more ``typical'' generators, we consider the $\text{Beta}(\alpha,\beta)$ distribution for several pairs of parameters. When $\alpha = \beta = 3/2$, the density is
\[
    f_{\frac{3}{2},\frac{3}{2}}(x) = \frac{8\sqrt{x(1-x)}}{\pi}.
\]
This density has zeros at 0 and 1, and so the corresponding copula density is zero on hyperplanes of the form $\{\bu \in [0, 1]^d: \sum_j u_j = n\}$, where $n \in \{0,1,\ldots,d\}$. In the bivariate case, this corresponds to zeros along the line segment $\Delta^1$ as well as at the two isolated points $\bzero$ and $\bone$, as shown in the left panel of \cref{fig:cop_beta}. On the other hand, with $\alpha = \beta = 1/2$, the univariate density is
\[
    f_{\frac{1}{2},\frac{1}{2}}(x) = \frac{1}{\pi \sqrt{x(1-x)}}.
\]
The corresponding copula density is essentially the reciprocal of the previous situation, with singularities along the same line segment and isolated points, as shown in the center panel of \cref{fig:cop_beta}. Finally, we can introduce asymmetry into the copula density by choosing any $\alpha \neq \beta$. For example, $\alpha = 1/2$, $\beta = 3/2$, yields the univariate density
\[
    f_{\frac{1}{2},\frac{3}{2}}(x) = \frac{2}{\pi}\sqrt{\frac{1-x}{x}}.
\]
and the copula density is shown in the right panel of \cref{fig:cop_beta}.

\begin{figure}[ht]
    \centering
    \includegraphics[scale=0.4]{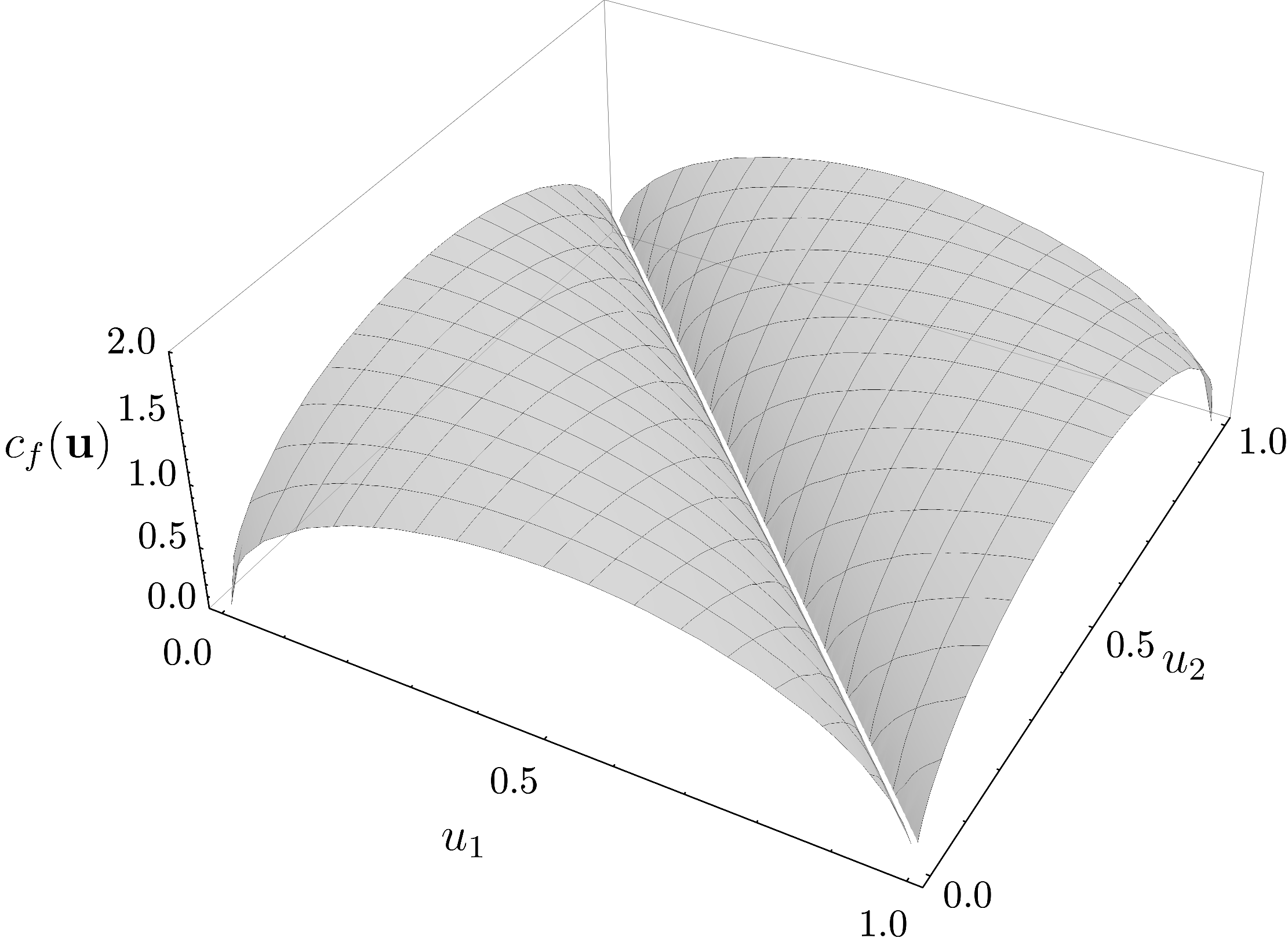}
    \includegraphics[scale=0.4]{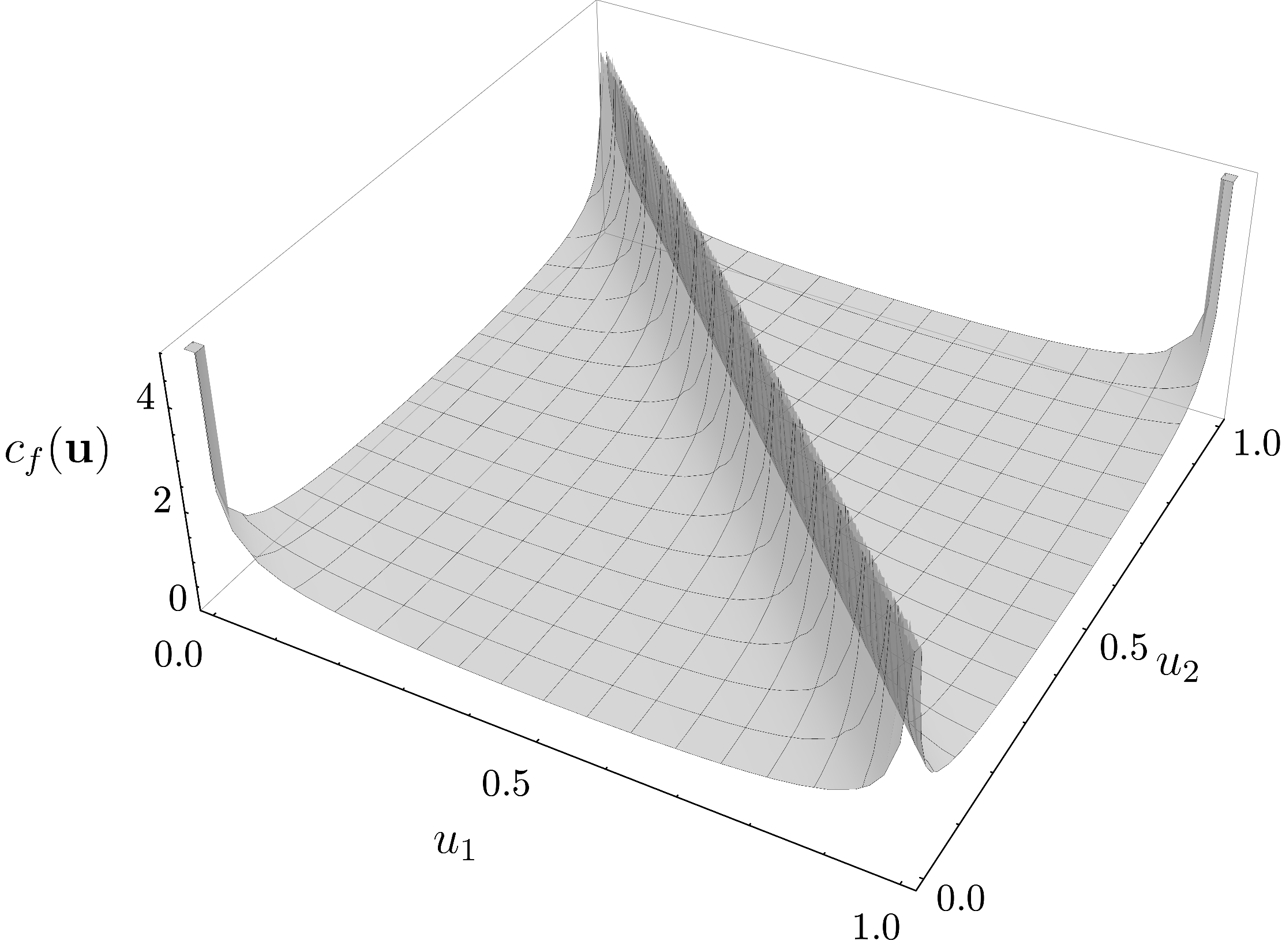}
    \includegraphics[scale=0.4]{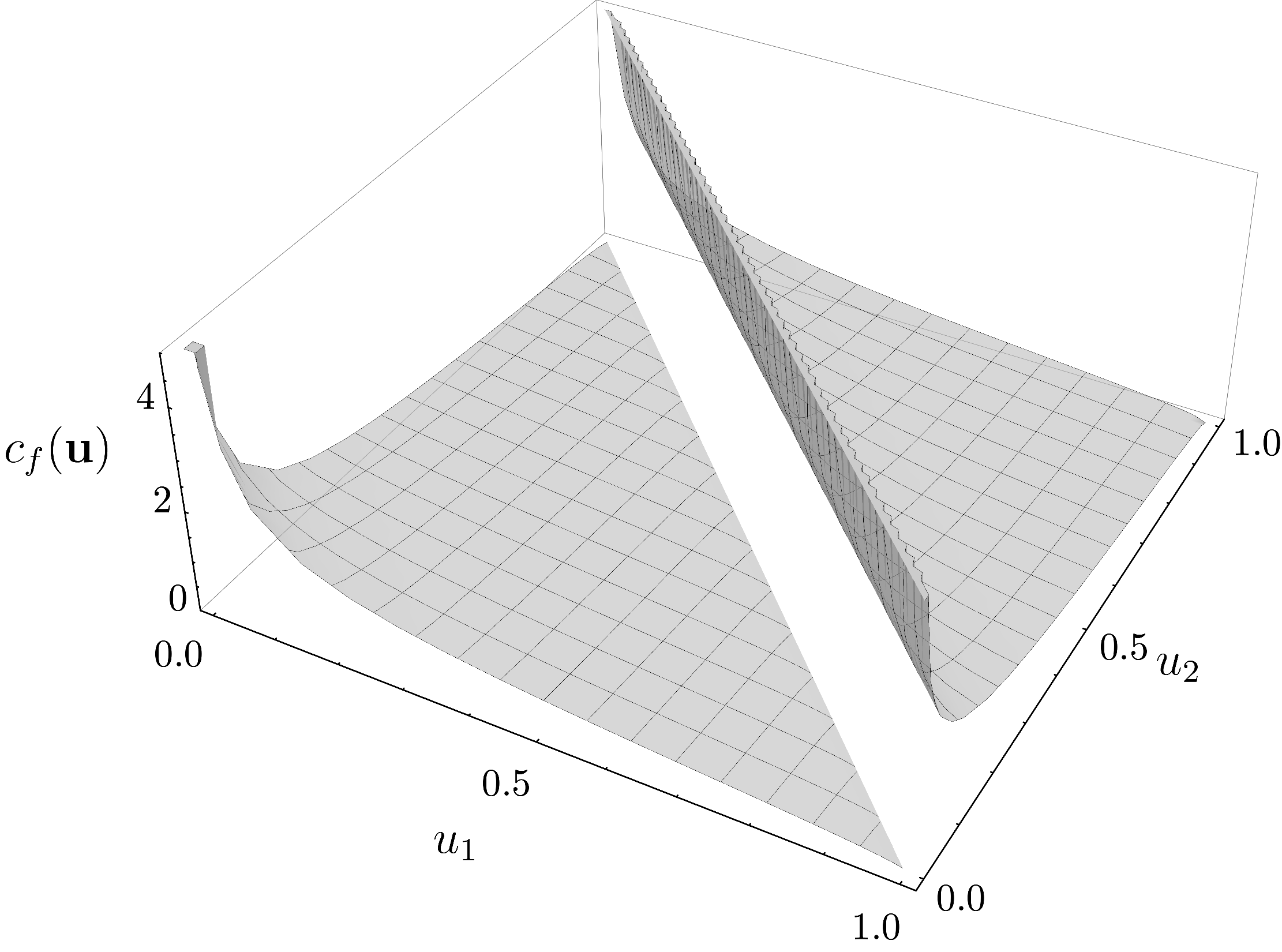}
    \caption{Bivariate copula densities corresponding to the generators $f_{\frac{3}{2},\frac{3}{2}}$, $f_{\frac{1}{2},\frac{1}{2}}$, and $f_{\frac{1}{2},\frac{3}{2}}$. }
    \label{fig:cop_beta}
\end{figure}

Inspection of the plots in \cref{fig:cop_pw,fig:cop_beta} reveals additional ``graphical'' properties of our bivariate copula densities, which readily extend to arbitrary dimensions $d \geq 3$. It is apparent from \cref{eq:ourcopula} that the copula density is constant on any hyperplane of the form $\{\bu \in [0,1]^d: \sum_j u_j = r\}$ where $r \in (0,d)$, on which the copula density is equal to $f(r \Mod{1})$. Moreover, one can show that the projection of the graph $\{\left(\bu, c_f(\bu)\right): \bu \in (0,1)^d\} \subseteq \R^{d+1}$ onto the 2-dimensional subspace $\{\bx \in \R^{d+1}: x_1 = \cdots = x_d\}$ returns $d$ copies of the graph of $f$ arranged side-by-side, each compressed horizontally by a factor of $\sqrt{d}$.

\subsection{A poorly behaved univariate density}\label{sub:univariate}

We now construct a density $f^* \in \cF_{[0,1]}$ which, in contrast with the examples of \cref{sub:simpleexamples}, is quite pathological. To this end, let $\QQ_1 := \QQ \cap (0, 1)$ and $q_1, q_2,\ldots$ be an arbitrary enumeration of $\QQ_1$. For each $q \in \QQ_1$, we define densities $f_q^+$ and $f_q^-$ that diverge at $1-q$ by
\begin{equation}
	f_q^\pm(u) := \begin{cases}\dfrac{1}{2\sqrt{(\pm(u + q))\Mod{1}}}, & u \neq 1-q \\
	1/2, & u = 1-q\\ \end{cases}
\label{eq:f_q}\end{equation}
and $f_q := (f_q^+ + f_q^-)/2$. By a simple change of variable, we see that
\begin{equation} \label{eq:integralfq}
	\int_0^1 f_q^+(u) \dif u = \int_0^{1-q} \frac{1}{2\sqrt{u + q}} \dif u + \int_{1-q}^1 \frac{1}{2\sqrt{u + q - 1}} \dif u = \int_q^1 \frac{1}{2\sqrt{u}} \dif u + \int_0^q \frac{1}{2\sqrt{u}} \dif u = 1,
\end{equation}
and similarly $\int_0^1 f_q^-(u) \dif u = 1$, so $f_q$ is a proper density. Now, let weights $w_q > 0$ be given such that $\sum_{q \in \QQ_1} w_q = 1$. Valid choices include, for example, $w_{q_n}^{(1)} := 6/(\pi^2 n^2)$ and $w_{q_n}^{(2)} := 2^{-n}$. Define
\[
	h^* := \sum_{q \in \QQ_1} w_q f_q.
\]
The function $h^*$ is non-negative and measurable, and it has Lebesgue integral equal to 1 over $[0,1]$ by Tonelli's theorem; it is thus a proper density (although it is not Riemann integrable, as we shall see shortly). To provide intuition, several normalized partial sums of $h^*$ are shown in \cref{fig:fplot}. Each $h_n^*(u)$ is constructed by choosing for $q_1,\ldots,q_n$ the $n$ rationals evenly spaced within the interval $[1/(n+1), n/(n+1)]$, endpoints included. The weights are chosen as $w_{q_j} \propto 1.1^{-\pi(j)}$, $j=1,\ldots,n$ for a random permutation $\pi$ of $(1,\ldots,n)$.

\begin{figure}[ht]
    \centering
    \includegraphics[scale=0.4]{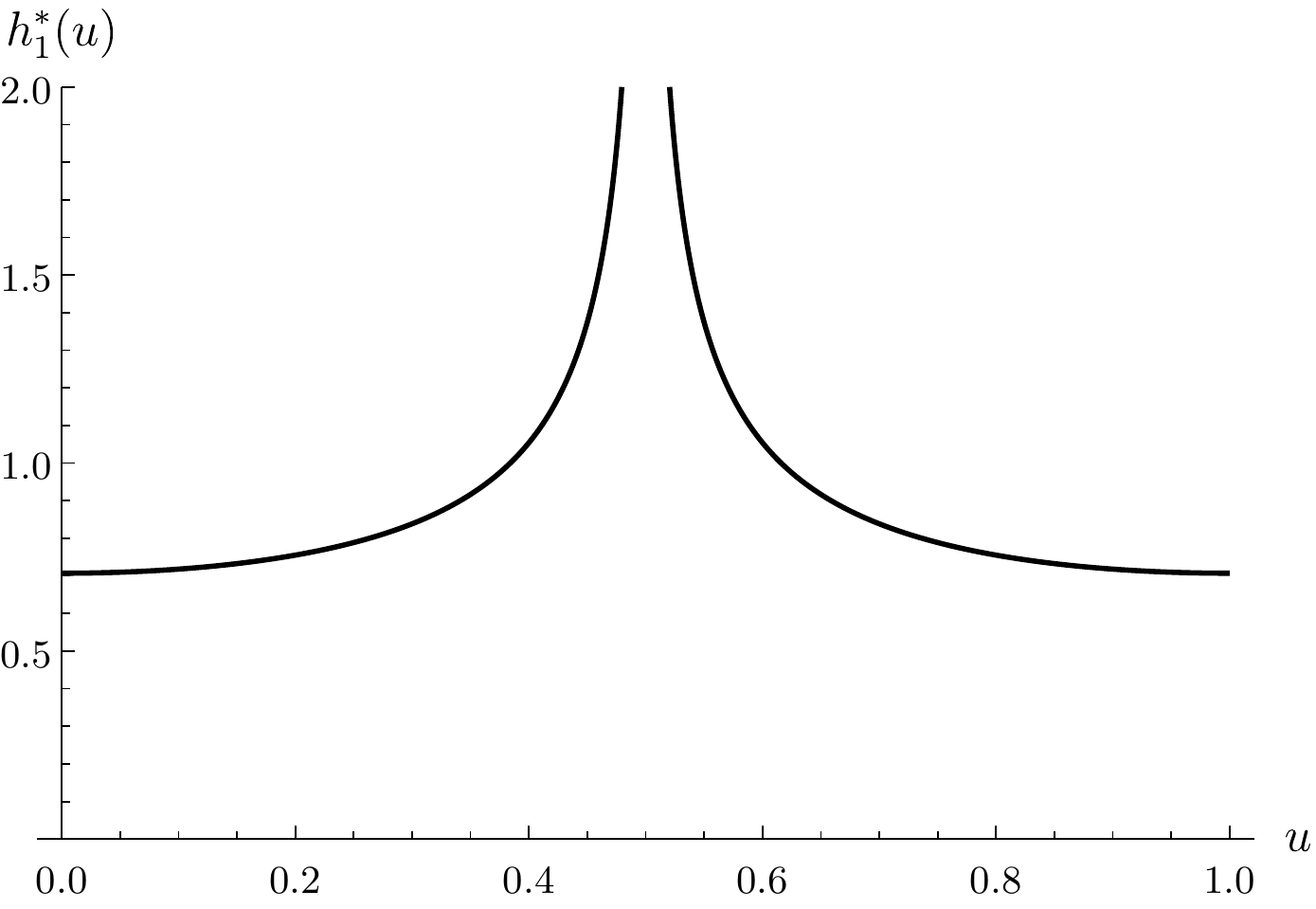}
    \includegraphics[scale=0.4]{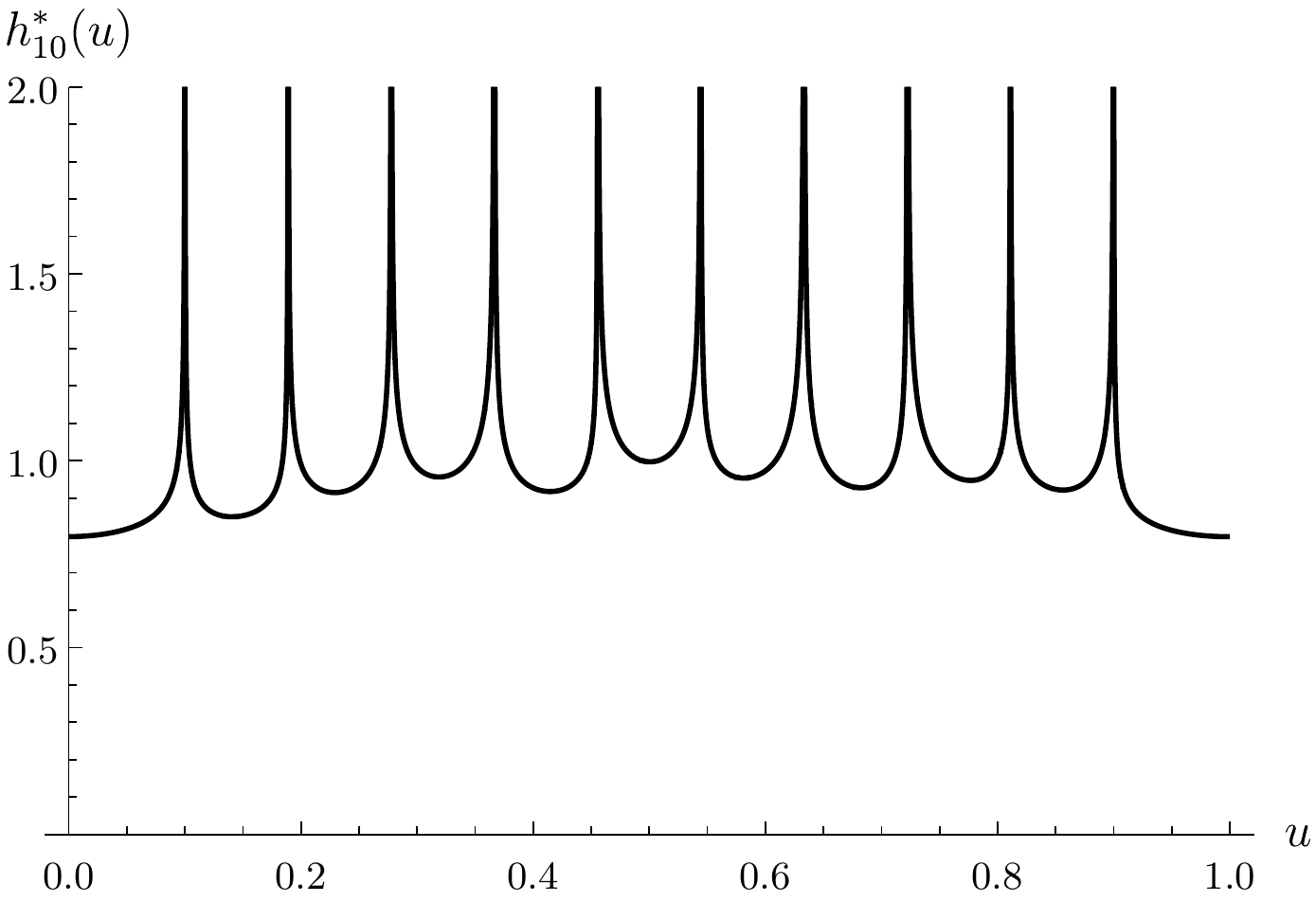}
    \includegraphics[scale=0.4]{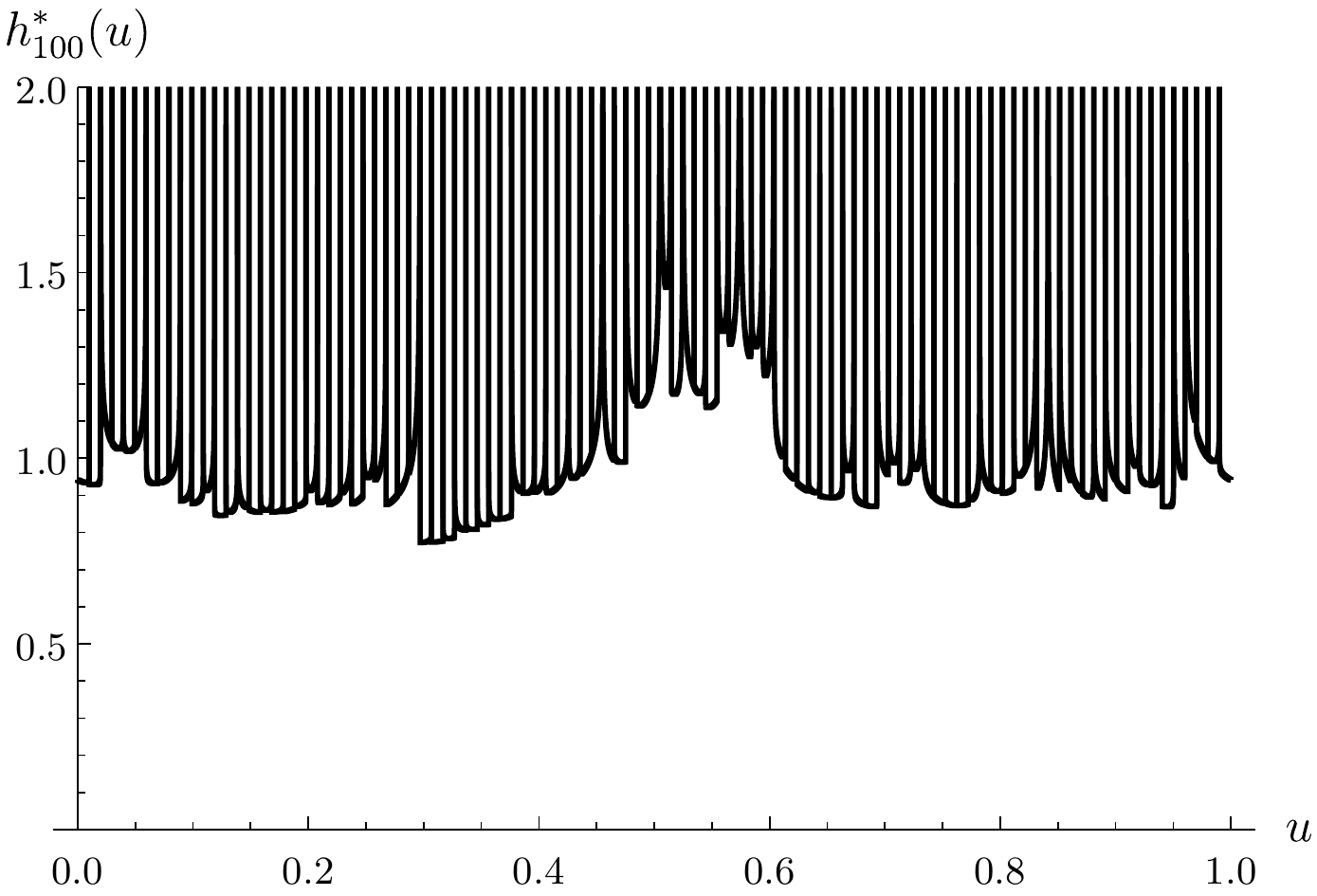}
    \caption{From left to right, the normalized partial sums $h_1^*$, $h_{10}^*$ and $h_{100}^*$.}
    \label{fig:fplot}
\end{figure}

Witness to the irregularity of the associated copula density is the set of singular points of $h^*$, which we investigate here. Consider the set $A = \{x \in [0,1]: h^*(x)= \infty\}$. While $\QQ_1$ is clearly contained in $A$, the latter set is, in fact, much larger.

\begin{prop}
$A$ is uncountable.
\end{prop}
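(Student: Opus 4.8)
The plan is to produce an uncountable subset of $A$ by a Cantor-type construction in which successive levels are assigned to distinct singularities $1-q$ of the building blocks $f_q$. The starting point is the elementary observation that each $f_q$ not only diverges at $1-q$ but does so at a quantified rate: inspecting \cref{eq:f_q} and splitting on the sign of $u-(1-q)$ shows that for $u\in[0,1]$ with $u>1-q$ one has $f_q^+(u)=\tfrac12|u-(1-q)|^{-1/2}$, while for $u<1-q$ one has $f_q^-(u)=\tfrac12|u-(1-q)|^{-1/2}$; since $f_q=(f_q^++f_q^-)/2$ and $f_q^\pm\ge 0$, this gives
\[
    f_q(u) \;\ge\; \frac{1}{4\sqrt{|u-(1-q)|}}, \qquad u\in[0,1]\setminus\{1-q\}.
\]
Write $p_n:=1-q_n$; since $\QQ_1$ is dense in $(0,1)$, so is the set $\{p_n:n\ge 1\}$.

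Next I would construct closed intervals $I_s\subset(0,1)$ indexed by finite binary strings $s$ (including the empty string $\emptyset$). Let $I_\emptyset$ be any nondegenerate closed subinterval of $(0,1)$. Given $I_s$, pick an index $n(s)$ with $p_{n(s)}\in\operatorname{int}(I_s)$ and with $n(s)$ larger than every index used so far (possible, since $\{p_n\}$ is dense in $I_s$ while only finitely many indices have been used), put $\varepsilon_s:=\min\{w_{q_{n(s)}}^2,\,2^{-|s|}\}$, and inside the nonempty open interval $\operatorname{int}(I_s)\cap(p_{n(s)}-\varepsilon_s,\,p_{n(s)}+\varepsilon_s)$ choose two disjoint nondegenerate closed subintervals, calling them $I_{s0}$ and $I_{s1}$. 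By induction the $2^k$ intervals at level $k$ are pairwise disjoint, so each infinite binary string $\sigma$ determines a single point $x_\sigma\in\bigcap_{k\ge 0}I_{\sigma|k}$ (writing $\sigma|k$ for the length-$k$ prefix), the assignment $\sigma\mapsto x_\sigma$ is injective, and hence the set $K$ of all such points is uncountable. The indices $n(s)$ are pairwise distinct by construction, and $\{p_n\}$ is countable, so $K\setminus\{p_n:n\ge 1\}$ remains uncountable.

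Finally I would verify that $K\setminus\{p_n:n\ge 1\}\subseteq A$. Fix $x=x_\sigma$ in this set. For every $k\ge 0$, $x\in I_{\sigma|(k+1)}\subset(p_{n(\sigma|k)}-\varepsilon_{\sigma|k},\,p_{n(\sigma|k)}+\varepsilon_{\sigma|k})$, so $|x-p_{n(\sigma|k)}|<\varepsilon_{\sigma|k}\le w_{q_{n(\sigma|k)}}^2$, and the lower bound on $f_q$ then gives
\[
    w_{q_{n(\sigma|k)}}\,f_{q_{n(\sigma|k)}}(x) \;\ge\; \frac{w_{q_{n(\sigma|k)}}}{4\sqrt{|x-p_{n(\sigma|k)}|}} \;\ge\; \frac14 .
\]
Since the $n(\sigma|k)$, $k\ge 0$, are distinct indices and $h^*=\sum_m w_{q_m}f_{q_m}$ is a series of nonnegative terms, restricting the sum to these indices yields $h^*(x)\ge\sum_{k\ge 0}\tfrac14=\infty$, i.e.\ $x\in A$. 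Hence $A$ contains an uncountable set.

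The routine parts are the pointwise lower bound on $f_q$, the standard fact that a suitably nested sequence of disjoint closed intervals carves out an uncountable set, and the final term-by-term summation. The one place that needs care is the recursion itself: one must keep the chosen indices $n(s)$ pairwise distinct — arranged by always taking $n(s)$ beyond all previous choices, which the density of $\{p_n\}$ permits — and one must fit the two children $I_{s0},I_{s1}$ simultaneously inside $\operatorname{int}(I_s)$ and within $\varepsilon_s$ of $p_{n(s)}$, which is possible precisely because $p_{n(s)}$ was chosen in the interior of $I_s$, leaving a genuine neighbourhood of $p_{n(s)}$ to work in.
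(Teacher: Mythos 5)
Your proof is correct, but it takes a genuinely different route from the paper. The paper defines $B_{n,k} := (1-q_n,\, \min\{1-q_n + w_{q_n}^{2k}/4,\,1\})$ and shows that $B := \bigcap_k \bigcup_n B_{n,k}$ is a dense $G_\delta$; the Baire category theorem then forces $B$ to be uncountable (a countable $B$ would make $[0,1] = B \cup B^c$ meagre), and membership in $B$ guarantees infinitely many distinct indices $n$ with $w_{q_n} f_{q_n}^+(x) \geq 1$, so $B \subseteq A$. You instead build an explicit Cantor scheme of nested closed intervals, each level trapped within $w_{q_{n(s)}}^2$ of a fresh singularity $1-q_{n(s)}$, and extract a set of cardinality $2^{\aleph_0}$ on which the series for $h^*$ diverges term by term. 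The two arguments hinge on the same quantitative estimate --- a point within roughly $w_q^2$ of the singularity $1-q$ picks up a contribution of order $1$ from the term $w_q f_q$ --- but they package it differently. Your construction is more elementary and self-contained (no appeal to Baire category) and directly exhibits continuum-many points of $A$; the paper's argument is shorter and yields the stronger topological conclusion that $A$ contains a comeagre subset of $[0,1]$, i.e., that $h^*$ diverges at ``most'' points in the category sense, which is more in the spirit of the pathology the authors want to advertise. All the delicate points in your recursion (keeping the indices $n(s)$ pairwise distinct, fitting two disjoint children inside $\operatorname{int}(I_s)$ near $p_{n(s)}$, and using $\varepsilon_s \leq 2^{-|s|}$ to shrink diameters) are handled correctly, and your pointwise lower bound $f_q(u) \geq \tfrac{1}{4}|u-(1-q)|^{-1/2}$ follows from the definition of $f_q^{\pm}$ exactly as you state.
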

\begin{proof}
Let 
\[
    B_{n,k} := \left(1-q_n, \min\left\{1 - q_n + \frac{w_{q_n}^{2k}}{4}, 1\right\}\right), \quad B := \bigcap_{k \geq 1} \bigcup_{n \geq 1} B_{n,k}.
\]
First, suppose for a contradiction that $B$ is countable. Let $\epsilon > 0$ and fix $y \in (0,1)$. For any $n^* \in \N$ such that $1-q_{n^*} \in (y-\epsilon, y)$, we have $|y - y'| < \epsilon$ for every $y' \in (1-q_{n^*}, \min\{1 - q_{n^*} + \min\{\epsilon, w_{q_{n^*}}^{2k}/4\},1\}) \subseteq B_{{n^*},k}$. It follows that $\cup_{n \geq 1} B_{n,k}$ is an open, dense subset of $[0,1]$. Equivalently, its complement $\cap_{n \geq 1} B_{n,k}^c$ is closed and nowhere dense, and so $B^c = \cup_{k \geq 1} \cap_{n \geq 1} B_{n,k}^c$ is meagre. Now, $B$ is also meagre because it can be written as a countable union of singletons. Therefore the union $B \cup B^c = [0,1]$ is itself meagre, but this contradicts the well known Baire category theorem \citep[see Theorem 3.6 of][for instance]{levy2012basic}. Hence $B$ is uncountable.

We now show that $B \subseteq A$, from which the result will follow. If $x \in B$, then for all $k \geq 1$ there exists some $n(k) \geq 1$ such that $x \in B_{n(k), k}$. That is, the sets $D_k^x := \{n: x \in B_{n, k}\}$, $k \geq 1$, are all nonempty. Then
\[
    2 h^*(x) > \sum_{n \in \bigcup_{k \in \N} D_k^x} w_{q_n} f_{q_n}^+(x) \geq \sum_{n \in \bigcup_{k \in \N} D_k^x} \frac{w_{q_n}}{2\sqrt{(x+q_n) \Mod{1}}} > \sum_{n \in \bigcup_{k \in \N} D_k^x} \frac{w_{q_n}}{2\sqrt{w_{q_n}^2/4}} = \left| \bigcup_{k \in \N} D_k^x \right| = \infty,
\]
and so $x \in A$. To obtain the last equality, suppose for contradiction that the cardinality of the union of the sets $D_k^x$ is finite. Then, since the $D_k^x$ are all nonempty, there must exist an integer $n_0 \in \limsup_{k} D_k^x$. This means that for infinitely many integers $k$,
\[
    1 < x + q_{n_0} < 1 + \frac{w_{q_{n_0}}^{2k}}{4},
\]
a contradiction.
\end{proof}

This is, of course, not the only way to construct a univariate density which is unbounded at uncountably many points. Given any uncountable set $D \subset [0,1]$ of measure zero --- of which the Cantor set is the classic example, although unlike $A$, it is nowhere dense in $(0,1)$ --- one can somewhat artificially define a function on $[0,1]$, such as $h := 1/\One_{D^c}$, which is infinite on $D$ but remains constant at 1 on $D^c$. While any such function is a density in $\cF_{[0,1]}$, it produces only the standard uniform distribution upon integration, and hence its use as a generator simply returns $C_h = \Pi$. More generally, one can produce a density in $\cF_{[0,1]}$ which is well-behaved on $D^c$ by choosing, say, some continuous $f \in \cF_{[0,1]}$ and then defining $h := f/\One_{D^c}$. The copulas generated by such $h$ will not differ from the kinds discussed in \cref{sub:simpleexamples}; their copula densities $c_h$ will not display divergent behavior anywhere. In contrast, the density of $C_{h^*}$ is fundamentally irregular, as it \emph{diverges} in any neighborhood: one can find a sequence converging to any point in $[0, 1]^d$ along which $c_{h^*}$ is finite but $c_{h^*} \to \infty$.

To avoid functions taking on the value $\infty$, we define 
\[
    f^* := h^* \One_{A^c},
\]
which is finite on $[0,1]$ but nevertheless unbounded in every neighborhood. These properties carry on directly to the copula density $c_{f^*}$. Indeed, since $\bu \mapsto \sum_j u_j \Mod 1$ is an open map, for any nonempty open $U \subset [0, 1]^d$, $\sup_{\bu \in U} c_{f^*}(\bu) = \sup_{x \in \tilde U} f^*(x) = \infty$ for some nonempty open $\tilde U \subset [0, 1]$. Still, the finiteness of $f^*$ ensures that $c_{f^*}$ itself is finite at every point.

\section{Discussion}\label{sec:discussion}

Although properties of the family $\cC$ such as the periodicity of the densities along line segments make it an unlikely choice for modelling purposes, the existence of this family should serve to illustrate the flexibility of copulas. It is easy to construct copulas (such as the checkerboard copula) whose densities are continuous almost everywhere but whose partial derivatives fail to exist on certain sets. The copula $C_{f^*}$, introduced in \cref{sub:univariate}, illustrates the opposite situation: the copula is $(d-1)$-times continuously differentiable everywhere, while its density $c_{f^*}$ violates virtually every regularity or boundedness principle one might reasonably expect.

The construction of the family $\cC$ is, of course, not unique. For example, one could replace \cref{eq:ourcopula} with
\begin{equation} \label{eq:generalization}
c_{f, \bm{i}}(\bu) := f\left(\sum_{j=1}^d (-1)^{i_j}u_j \Mod{1} \right)
\end{equation}
for any $\bm{i} := (i_1,\ldots,i_d) \in \{0,1\}^d$, the existing model corresponding to $\bm{i} = \bzero$. The resulting copula may no longer be exchangeable and the points at which $c_{f, \bm{i}}$ behaves erratically become harder to describe; however, most other smoothness properties (\cref{prop:pd} in particular) would still be expected to hold. Moreover, more realistic dependence models might be achieved in the general form of \cref{eq:generalization}. For instance, in the bivariate case with $i_1 \neq i_2$, some of the induced copulas will have a Spearman's $\rho$ arbitrarily close to 1, in contrast to the findings of \cref{prop:spearman}. One can show that in the bivariate case, the class $\cC$ in its current form is not closed under the $*$-product operation. We conjecture, however, that it would be closed were it extended to all copulas of the form of \cref{eq:generalization}.

We have not addressed Kendall's $\tau$ in the bivariate case. \cref{prop:spearman} provides upper and lower bounds for $\rho_{C_f}$ which are asymptotically sharp as $f$ approaches certain point masses. The lower bound of $\rho_{C} \geq -1$ is attained if and only if $\tau_{C} = -1$, because in this case $C$ is the countermonotonicity copula \citep{embrechts2001modelling}. However, from a well known inequality of \cite{daniels1950rank}, the upper bound $\rho_{C_f} \leq 1/2$ implies only that $\tau_{C_f} \leq 2/3$, and it would be interesting to know if this bound is sharp (i.e., if there exists a sequence of univariate densities $\{f_n\}$ such that $\kappa_{C_{f_n}} \to 2/3$). More generally, \cref{prop:spearman} shows that $\rho_{C_f}$ is a linear function of the second factorial moment of $X \sim f$, and a similar representation of $\tau_{C_f}$ in terms of $\EE\left[g(X)\right]$ for some function $g$ would yield interesting insights.


\end{document}